\documentclass[a4paper, 11pt, twoside]{article}
\usepackage{fullpage}

\usepackage{amsmath}
\usepackage{amssymb}
\usepackage{mathtools}
\usepackage{amsthm}
\usepackage{nicefrac}
\usepackage{multirow}
\usepackage{tikz-cd} 
\usepackage{subcaption}
\usepackage{multicol,caption}

\usepackage{wrapfig}
\usepackage{algorithm}

\usepackage[flushleft]{threeparttable} 
\usepackage{colortbl}
\definecolor{bgcolor}{rgb}{0.8,1,1}
\definecolor{bgcolor2}{rgb}{0.8,1,0.8}
\definecolor{niceblue}{rgb}{0.0,0.19,0.56}

\usepackage{hyperref}
\hypersetup{colorlinks,linkcolor={blue},citecolor={niceblue},urlcolor={blue}}

\usepackage{pifont}
\definecolor{PineGreen}{RGB}{0,110,51}
\definecolor{BrickRed}{RGB}{143,20,2}

\usepackage{thmtools}

\usepackage[colorinlistoftodos,bordercolor=orange,backgroundcolor=orange!20,linecolor=orange,textsize=scriptsize]{todonotes}

\definecolor{shadecolor}{gray}{0.9}
\declaretheoremstyle[
headfont=\normalfont\bfseries,
notefont=\mdseries, notebraces={(}{)},
bodyfont=\normalfont,
postheadspace=0.5em,
spaceabove=\topsep,
mdframed={
	skipabove=8pt,
	skipbelow=8pt,
	hidealllines=true,
	backgroundcolor={shadecolor},
	innerleftmargin=4pt,
	innerrightmargin=4pt}
]{shaded}

\declaretheorem[style=shaded,within=section]{definition}
\declaretheorem[style=shaded,sibling=definition]{theorem}

\declaretheorem[style=shaded,sibling=definition]{assumption}
\declaretheorem[style=shaded,sibling=definition]{corollary}

\declaretheorem[style=shaded,sibling=definition]{lemma}
\declaretheorem[style=shaded,sibling=definition]{remark}

\usepackage[capitalize,noabbrev]{cleveref}

\usepackage{xspace}

\newcommand{\algname}[1]{{\color{PineGreen}\sf  #1}\xspace}

\newcommand{\R}{\mathbb{R}}

\newcommand{\hx}{\hat{x}}

\newcommand{\la}{\left\langle}

\newcommand{\ra}{\right\rangle}

\newcommand{\eqdef}{:=}

\newcommand{\reals}{\mathbb{R}}

\usepackage{amsmath,amsfonts,amssymb,amsthm,array}
\usepackage{caption}

\usepackage{graphicx}
\usepackage{booktabs} 

\usepackage{tabularx}
\usepackage{pifont}

\usepackage{float}
\usepackage{enumitem}
\usepackage{nicefrac}

\usepackage{algorithmic}

\usepackage{tikz}
\usetikzlibrary{calc,arrows.meta}

\theoremstyle{plain}

\theoremstyle{remark}

\usepackage{mdframed} 
\usepackage{thmtools}

\usepackage[flushleft]{threeparttable} 

\usepackage{subcaption}
\usepackage{multirow}
\usepackage{colortbl}
\definecolor{bgcolor}{rgb}{0.8,1,1}
\definecolor{bgcolor2}{rgb}{0.8,1,0.8}
\definecolor{niceblue}{rgb}{0.0,0.19,0.56}

\usepackage{hyperref}
\hypersetup{colorlinks,linkcolor={blue},citecolor={niceblue},urlcolor={blue}}

\usepackage[numbers]{natbib}
\usepackage{sidecap}

\definecolor{shadecolor}{gray}{0.9}
\declaretheoremstyle[
headfont=\normalfont\bfseries,
notefont=\mdseries, notebraces={(}{)},
bodyfont=\normalfont,
postheadspace=0.5em,
spaceabove=1pt,
mdframed={
  skipabove=8pt,
  skipbelow=8pt,
  hidealllines=true,
  backgroundcolor={shadecolor},
  innerleftmargin=4pt,
  innerrightmargin=4pt}
]{shaded}

\usepackage{xspace}

\begin{document}
\title{\bf Single-call Polyak-Type Extragradient Methods \\for Monotone Root-Finding Problems}

\author{Keke Li } 
\date{\small College of General Education,\\
	\small Chongqing Polytechnic University of Electronic Technology, Chongqing, China\\
}    
\maketitle

\begin{abstract}
We propose a single-call variant of Polyak extragradient, termed
SPolyakEG, by recycling the previous extrapolation operator evaluation.
Under a delayed critical condition, we establish that SPolyakEG
achieves an $O(1/K)$ residual convergence rate for monotone operators
and linear convergence under strong monotonicity. Compared with
PolyakEG, SPolyakEG reduces the number of operator evaluations from
two to one per iteration.
\end{abstract}

\noindent \textbf{Keywords:} 
 Extragradient methods; Polyak step-sizes; monotone root-finding;
 single-call methods; adaptive step-sizes
\vspace{.2cm}

\noindent \textbf{Mathematics Subject Classification} 65K15, 47H05, 90C33, 62L20

\section{Introduction}

Finding a zero of a nonlinear operator is a fundamental problem
in optimization, machine learning, and equilibrium computation \cite{gorbunov2022extragradient}.
Given an operator
$F:\mathbb{R}^{d}\rightarrow\mathbb{R}^{d}$,
the root-finding problem aims to find
	$F(x^\star)=0$.
A broad class of iterative methods has been developed for solving
such problems, including gradient-based methods \cite{gorbunov2022extragradient,yoon2026polyak, choudhury2025next,choudhury2026extragradient,hsieh2019convergence,tran2026randomized}, extragradient
methods \cite{ gorbunov2022extragradient, yoon2026polyak, choudhury2025next,choudhury2026extragradient}, and optimistic schemes \cite{gorbunov2022extragradient,hsieh2019convergence,tran2026randomized,gorbunov2022last}.

The extragradient (EG) method \cite{gorbunov2022extragradient,choudhury2026extragradient,Korpelevich1976} is one of the most representative
approaches for solving monotone root-finding problems.
At each iteration, EG performs an extrapolation step followed by
a correction step, which requires two evaluations of the operator.
Specifically, $
\hat{x}_k=x_k-\gamma F(x_k)$, 
and
$x_{k+1}=x_k-\eta F(\hat{x}_k)$.
Although EG enjoys favorable convergence guarantees, the requirement
of two operator evaluations per iteration becomes computationally
expensive when evaluating $F$ is costly.

To reduce the oracle complexity, several single-call variants have
been proposed by reusing previous operator evaluations.
Representative examples include Popov's method \cite{Popov1980modification}, optimistic gradient
descent \cite{hsieh2019convergence,Daskalakis2018training, Gidel2019variational, mertikopoulos2019Optimistic}, and reflected gradient methods \cite{huang2024navigating, malitsky2015projected, cai2023accelerated}.
The main idea behind these methods is to replace the current
operator evaluation $F(x_k)$ in the extrapolation step by previously
computed information.
Consequently, only one new operator evaluation is required per
iteration.

Recently, Polyak-type adaptive stepsizes have been investigated for
extragradient methods \cite{yoon2026polyak, choudhury2025next}.
The Polyak extragradient method dynamically adjusts the stepsize by
using the inner product between the extrapolation direction and the
operator value.
However, existing Polyak extragradient schemes still require two
operator evaluations per iteration, since the extrapolation step is
constructed by the current operator value $F(x_k)$.
This motivates the following question:
\begin{quote}
	Can Polyak-type adaptive stepsizes be incorporated into
	single-call extragradient schemes while preserving convergence
	guarantees?
\end{quote}

In this paper, we answer this question affirmatively by proposing
a single-call Polyak extragradient method (SPolyakEG).
Instead of using the current operator value, SPolyakEG reuses the
previously computed value:$
\hat{x}_k
=
x_k-\gamma_kF(\hat{x}_{k-1})$.
After obtaining the new operator evaluation
$F(\hat{x}_k)$,
the Polyak stepsize is computed as $
\alpha_k = \frac{
	\langle F(\hat{x}_k),
	x_k-\hat{x}_k\rangle }
{\|F(\hat{x}_k)\|^2}$.
Therefore, SPolyakEG requires only one new operator evaluation per
iteration.

The main challenge introduced by the single-call strategy is the
delayed operator information in the extrapolation step.
To address this issue, we introduce a delayed critical condition $
\|F(\hat{x}_k)-F(\hat{x}_{k-1})\|
\leq
A\|F(\hat{x}_{k-1})\|$. 
Based on this condition, we establish a convergence theory for
SPolyakEG.

Our main contributions are summarized as follows:
\begin{itemize}
	\item We propose SPolyakEG, a single-call variant of Polyak
	extragradient methods, which reduces the number of operator
	evaluations from two to one per iteration.
	
	\item We introduce the delayed critical condition and establish several	fundamental properties, including bounds on consecutive operator values and adaptive stepsizes.
	
	\item For monotone operators, we prove a Fej\'er-type descent inequality and establish the sublinear residual convergence rate
	$ \min_{0\leq k\leq K} \|F(\hat{x}_k)\|^2=O(1/K)$.
 
	\item For strongly monotone operators, we prove that SPolyakEG achieves linear convergence under a suitable stepsize condition.
	
	\item We compare the oracle complexity of SPolyakEG with the classical Polyak extragradient method and demonstrate that SPolyakEG achieves the same order of convergence rate with reduced operator evaluations.
	
\end{itemize}

\section{Preliminaries}

We consider the monotone root-finding problem
\begin{equation}
	\label{eq:root}
	\text{find } x_\star\in\mathbb{R}^{d}
	\quad\text{such that}\quad
	F(x_\star)=0,
\end{equation}
where $F:\mathbb{R}^{d}\rightarrow\mathbb{R}^{d}$.
Throughout the paper, $\mathbb{R}^{d}$ denotes the $d$-dimensional
Euclidean space, $\langle\cdot,\cdot\rangle$ denotes the standard
inner product, and $\|\cdot\|$ denotes the associated Euclidean norm.
We assume that the solution set
$
\mathcal{X}^*
\eqdef
\{x\in\mathbb{R}^{d}:F(x)=0\}
$
is nonempty. When the solution is unique, we denote it by $x_\star$.

We first recall the Lipschitz continuity of the operator.

\begin{definition}[$L$-Lipschitz continuity]
	\label{assumption:lipschitz}
	The operator $F:\mathbb{R}^{d}\rightarrow\mathbb{R}^{d}$ is
	$L$-Lipschitz continuous if there exists $L>0$ such that
	\begin{equation*}
		\label{eq:lipschitz}
		\|F(x)-F(y)\|
		\leq
		L\|x-y\|,
		\qquad
		\forall x,y\in\mathbb{R}^{d}.
	\end{equation*}
\end{definition}

We also recall the monotonicity condition.

\begin{definition}[Monotonicity]
	\label{assumption:monotone}
	The operator $F:\mathbb{R}^{d}\rightarrow\mathbb{R}^{d}$ is
	monotone if
	\begin{equation*}
		\label{eq:monotone}
		\langle F(x)-F(y),x-y\rangle
		\geq
		0,
		\qquad
		\forall x,y\in\mathbb{R}^{d}.
	\end{equation*}
\end{definition}

Unless stated otherwise, $F$ is assumed to be monotone.
For linear convergence, we additionally consider strong monotonicity.

\begin{definition}[$\mu$-strong monotonicity]
	\label{assumption:strong-monotone}
	The operator $F:\mathbb{R}^{d}\rightarrow\mathbb{R}^{d}$ is
	$\mu$-strongly monotone if there exists $\mu>0$ such that
	\begin{equation*}
		\label{eq:strong-monotone}
		\langle F(x)-F(y),x-y\rangle
		\geq
		\mu\|x-y\|^2,
		\qquad
		\forall x,y\in\mathbb{R}^{d}.
	\end{equation*}
\end{definition}

\section{Single-call Polyak-type extragradient algorithm (SPolyakEG) }

The proposed SPolyakEG method reduces the number of new operator
evaluations by reusing the previously computed value
$F(\hat{x}_{k-1})$ in the construction of the extrapolation point.
Specifically, the standard evaluation $F(x_k)$ used in PolyakEG is
replaced by the recycled operator evaluation $F(\hat{x}_{k-1})$.
This modification leads to a single new operator evaluation per
iteration after initialization.

\subsection{From extragradient to single-call Polyak-type extragradient} 

The classical extragradient (EG) method \cite{Korpelevich1976,mokhtari2020convergence,hsieh2023decision} performs an extrapolation step
followed by a correction step based on the operator evaluated at the
extrapolated point:
\begin{equation}
	\label{eqa:EG}
	\begin{aligned}
		\hat{x}_k &= x_k-\gamma_kF(x_k),\\
		x_{k+1} &= x_k-\gamma_kF(\hat{x}_k).
	\end{aligned}
	\tag{EG}
\end{equation}

The single-call extragradient (SEG) method \cite{Gidel2019variational,cai2023accelerated,choudhury2023single} reuses the operator evaluation
at the previous extrapolated point to construct the current extrapolation,
thereby requiring only one new operator evaluation per iteration:
\begin{equation}
	\label{eqa:SEG}
	\begin{aligned}
		\hat{x}_k &= x_k-\gamma_kF(\hat{x}_{k-1}),\\
		x_{k+1} &= x_k-\gamma_kF(\hat{x}_k).
	\end{aligned}
	\tag{SEG}
\end{equation}

The Polyak-type extragradient (PolyakEG) method \cite{yoon2026polyak, choudhury2025next} incorporates an adaptive
stepsize into the correction step of EG, with the stepsize determined by
the extrapolation and the corresponding operator evaluation:
\begin{equation}
	\label{eqa:PolyakEG}
	\begin{aligned}
		\hat{x}_k &= x_k-\gamma_kF(x_k),\\
		\alpha_k &=
		\frac{
			\left\langle F(\hat{x}_k),x_k-\hat{x}_k\right\rangle
		}{
			\left\|F(\hat{x}_k)\right\|^2
		},\\
		x_{k+1} &= x_k-\alpha_kF(\hat{x}_k).
	\end{aligned}
	\tag{PolyakEG}
\end{equation}

\subsection{Polyak step-size for single-call extragradient}
\label{subsec:polyak-stepsize}

We next derive the Polyak-type adaptive stepsize for SPolyakEG.
The derivation follows the same distance-to-solution principle as
PolyakEG, while taking into account the recycled operator evaluation.

For the nontrivial case $F(\hat{x}_k)\neq0$, consider the update
$
x_{k+1}=x_k-\alpha_kF(\hat{x}_k)$.
If $F(\hat{x}_k)=0$, then $\hat{x}_k$ is already a solution of
\eqref{eq:root}, and the algorithm terminates. Hence, we consider
the nontrivial case $F(\hat{x}_k)\neq0$ throughout the analysis.

%

Let $x_* \in X^*$ and consider the single-call extragradient update
$
\hat x_k  =x_k-\gamma_k F(\hat x_{k-1}), ~
x_{k+1}  =x_k-\alpha_kF(\hat x_k).
$
For any $\alpha_k\geq 0$, we have
\begin{align*}
	\|x_{k+1}-x^*\|^2
	={}&
	\|x_k-x^*\|^2
	-2\alpha_k
	\langle F(\hat x_k),x_k-\hat x_k\rangle
	\nonumber\\
	&-2\alpha_k
	\langle F(\hat x_k),\hat x_k-x^*\rangle
	+\alpha_k^2\|F(\hat x_k)\|^2.
\end{align*}
Since $F$ is monotone and $F(x^*)=0$, we know that $
\langle F(\hat x_k),\hat x_k-x^*\rangle\geq 0$.
Therefore,
\begin{align}
	\|x_{k+1}-x^*\|^2
	\leq 
	\|x_k-x^*\|^2
	-2\alpha_k
	\langle F(\hat x_k),x_k-\hat x_k\rangle  
	+\alpha_k^2\|F(\hat x_k)\|^2.
	\label{eq:single-call-upper-bound}
\end{align}

Following the Polyak-type principle, we minimize the upper bound
in \eqref{eq:single-call-upper-bound} with respect to
$\alpha_k\geq0$. This gives $
	\alpha_k
	=
	\frac{
		\left[
		\langle F(\hat x_k),x_k-\hat x_k\rangle
		\right]_+
	}{
		\|F(\hat x_k)\|^2
	}$. 
For single-call extragradient, 
$
x_k-\hat x_k
=
\gamma_kF(\hat x_{k-1})$,
and hence $
	\alpha_k
	=
	\gamma_k
	\frac{
		\left[
		\langle F(\hat x_k),F(\hat x_{k-1})\rangle
		\right]_+
	}{
		\|F(\hat x_k)\|^2
	}$. 

Under the delayed critical condition introduced in
Section~\ref{sec:convergence}, the inner product is positive whenever
$F(\hat{x}_k)\neq0$. Therefore, the projection onto the nonnegative
half-line can be omitted in the subsequent analysis.

\begin{equation*}
	\label{eq:single-call-polyak-stepsize}
		\alpha_k
		=
		\gamma_k
		\frac{
			\langle F(\hat x_k),F(\hat x_{k-1})\rangle
		}{
			\|F(\hat x_k)\|^2
		}.
\end{equation*}

The essential difference from PolyakEG is that the current operator
value $F(x_k)$ used in the extrapolation step is replaced by the
previously stored value $F(\hat x_{k-1})$. Consequently, the same
Polyak-type minimization principle leads to an adaptive stepsize
involving two consecutive extrapolation operator values.

\subsection{Single-call Polyak-tpype extragradient algorithm}

The proposed single-call Polyak-type extragradient method,
referred to as SPolyakEG, is summarized in
Algorithm~\ref{alg:SPolyakEG}.

\begin{algorithm}[H]
	\small
	\caption{\algname{\textcolor{PineGreen}{SPolyakEG}}}
	\label{alg:SPolyakEG}
	\begin{algorithmic}[1]
		\REQUIRE Initial point $x_0 \in \R^d$, initial extrapolation step size $\gamma_0 > 0$, and a rule for choosing extrapolation steps $\{\gamma_k\}_{k=1}^\infty$.
		
		\STATE \textbf{Step 0 (Two-evaluation starting step):}
		\STATE $\hx_0 = x_0 - \gamma_0 F(x_0)$.
		\vspace{2mm}
		\STATE \colorbox{green!20}{$\alpha_0 = \frac{\la F(\hx_0), x_0 - \hx_0 \ra}{\|F(\hx_0)\|^2}$.}
		\vspace{2mm}
		\STATE $x_1 = x_0 - \alpha_0 F(\hx_0)$.
		\STATE Store the operator value $F(\hx_0)$.
		\vspace{3mm}
		
		\FOR{$k = 1, 2,...,K$}
		\STATE $\hx_k = x_k - \gamma_k F(\hx_{k-1})$.
		\vspace{2mm}
		\STATE \colorbox{green!20}{$\alpha_k = \frac{\la F(\hx_k), x_k - \hx_k \ra}{\|F(\hx_k)\|^2}$.}
		\vspace{2mm}
		\STATE $x_{k+1} = x_k - \alpha_k F(\hx_k)$.
		\STATE Store the operator value $F(\hx_k)$.
		\ENDFOR
	\end{algorithmic}
\end{algorithm}

The main idea of SPolyakEG is to reuse the operator evaluation
$F(\hat{x}_{k-1})$ computed in the previous iteration when constructing
the current extrapolation point, thereby avoiding a new evaluation of
$F(x_k)$. Thus, after the initialization step, only one new operator
evaluation, namely $F(\hat{x}_k)$, is required at each iteration.
For notational convenience, let $g_k:=F(\hat{x}_k)$. 
Then, for $k\geq 1$, the SPolyakEG iteration can be written compactly as
\begin{equation}
	\begin{aligned}
		\hat x_k &= x_k-\gamma_k g_{k-1}, \\
		\alpha_k &=
		\frac{
			\langle g_k,x_k-\hat x_k\rangle
		}	{\|g_k\|^2}, \\
	x_{k+1} &= x_k-\alpha_k g_k .	
	\end{aligned} \tag{SPolyakEG}
\end{equation}
Since $g_{k-1}$ is stored from the previous iteration, only one new
operator evaluation, namely $g_k=F(\hat{x}_k)$, is required at each
iteration for $k\geq 1$. This compact formulation will be used
throughout the subsequent theoretical analysis.
 
\section{Convergence analysis}
\label{sec:convergence}

We establish the convergence properties of SPolyakEG under the
delayed critical condition introduced in the previous section.
We first derive several consequences of this condition, including
bounds on consecutive operator evaluations and the Polyak stepsize.
We then establish a sufficient condition under which the delayed
critical condition is guaranteed for Lipschitz continuous operators.
Based on these properties, we derive a Fej\'er-type descent inequality
and an $O(1/K)$ rate for the squared operator residual. Finally, under
$\mu$-strong monotonicity, we establish linear convergence of the
iterates.

\subsection{Delayed critical condition}
\label{subsec:delayed-critical-condition}

We begin by examining the consequences of the delayed critical
condition for the consecutive operator evaluations used by SPolyakEG.
In particular, this condition provides upper and lower bounds on
$\|g_k\|$ relative to $\|g_{k-1}\|$ and guarantees a positive inner
product between $g_k$ and $g_{k-1}$. These properties are essential for
establishing the positivity and bounds of the Polyak stepsize
$\alpha_k$.
We impose the following delayed critical condition.

\begin{assumption}[Delayed critical condition]\label{assumption:critical}	
	There exists a constant $A\in(0,1)$ such that	
	\begin{equation}
		\|g_k-g_{k-1}\|
		\leq
		A\|g_{k-1}\|,
		\qquad k \geq 1.
	\end{equation}	
\end{assumption}

%
%
%

We first establish a sufficient condition for the delayed critical
condition under the Lipschitz continuity assumption.

\begin{lemma}[Bounds of consecutive operator values]
	\label{lem:operator-bound}
	
Assume that the delayed critical condition holds, i.e.,
	\begin{equation*}
		\label{eq:delayed-condition}
		\|g_k-g_{k-1}\|
		\leq
		A\|g_{k-1}\|,
		\qquad
		0 < A < 1, ~ K \geq 1.
	\end{equation*}
	Then the consecutive operator values satisfy
	\begin{equation*}
		\label{eq:g-bound}
		\frac{1}{1+A}\|g_k\|
		\leq
		\|g_{k-1}\|
		\leq
		\frac{1}{1-A}\|g_k\|.
	\end{equation*}
	Moreover, the adaptive stepsize satisfies
	\begin{equation*}
		\label{eq:alpha-upper}
		\alpha_{k-1}
		\leq
		\frac{\gamma_{k-1}}{1-A}.
	\end{equation*}
	
\end{lemma}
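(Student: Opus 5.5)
The plan is to obtain both inequalities from the triangle inequality applied to the delayed critical condition (Assumption~\ref{assumption:critical}). Then I will bound the Polyak stepsize with Cauchy--Schwarz, using the closed form $\alpha_{k-1}=\gamma_{k-1}\langle g_{k-1},g_{k-2}\rangle/\|g_{k-1}\|^2$ derived in Section~\ref{subsec:polyak-stepsize}.

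\emph{Step 1 (norm bounds).} Fix $k\geq1$. From $\|g_k\|\leq\|g_{k-1}\|+\|g_k-g_{k-1}\|\leq(1+A)\|g_{k-1}\|$ we get the left inequality $\frac{1}{1+A}\|g_k\|\leq\|g_{k-1}\|$. From $\|g_{k-1}\|\leq\|g_k\|+\|g_k-g_{k-1}\|\leq\|g_k\|+A\|g_{k-1}\|$ we get $(1-A)\|g_{k-1}\|\leq\|g_k\|$. Since $A<1$, dividing by $1-A>0$ gives the right inequality. As a by-product, $g_{k-1}\neq0$ forces $g_k\neq0$, so the nontrivial case propagates along the iteration.

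\emph{Step 2 (positivity of the inner product).} I will record that
\[
\langle g_k,g_{k-1}\rangle=\|g_{k-1}\|^2+\langle g_k-g_{k-1},g_{k-1}\rangle\geq(1-A)\|g_{k-1}\|^2>0 .
\]
This justifies dropping the positive part in the stepsize formula, so $\alpha_k=\gamma_k\langle g_k,g_{k-1}\rangle/\|g_k\|^2$ is positive and well defined.

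\emph{Step 3 (stepsize bound).} For $k\geq2$, the index $k-1\geq1$ lies in the main loop, so $x_{k-1}-\hat x_{k-1}=\gamma_{k-1}g_{k-2}$. Cauchy--Schwarz then gives
\[
\alpha_{k-1}\leq\gamma_{k-1}\frac{\|g_{k-2}\|}{\|g_{k-1}\|}.
\]
Applying the right inequality of Step~1 at index $k-1$, namely $\|g_{k-2}\|\leq\frac{1}{1-A}\|g_{k-1}\|$, yields $\alpha_{k-1}\leq\gamma_{k-1}/(1-A)$.

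\emph{Main obstacle: the boundary index $k=1$.} Here $\alpha_0=\gamma_0\langle g_0,F(x_0)\rangle/\|g_0\|^2$ involves $F(x_0)$ rather than a previous extrapolation value. The plan is to adopt the natural convention $g_{-1}:=F(x_0)$. This is consistent with Step~0 of Algorithm~\ref{alg:SPolyakEG}, since $\hat x_0=x_0-\gamma_0 g_{-1}$. Under this convention I would require the delayed critical condition also at $k=0$, i.e.\ $\|g_0-F(x_0)\|\leq A\|F(x_0)\|$, and then the same Cauchy--Schwarz argument as in Step~3 gives $\alpha_0\leq\gamma_0/(1-A)$. If one prefers not to extend the assumption, the stepsize bound should be stated only for $\alpha_{k-1}$ with $k\geq2$, while the norm bounds hold for all $k\geq1$ as written.
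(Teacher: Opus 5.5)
Your proposal is correct and matches the paper's proof: the triangle inequality in both directions gives the norm bounds, and Cauchy--Schwarz on $\alpha_{k-1}=\gamma_{k-1}\langle g_{k-1},g_{k-2}\rangle/\|g_{k-1}\|^2$, combined with the right-hand bound at index $k-1$, gives the stepsize bound (your Step~2 is harmless but not needed here). Your handling of $k=1$ is more careful than the paper's, which applies the bound at index $k-1=0$ without comment; that step silently needs the condition at $k=0$ with $g_{-1}=F(x_0)$, a convention the paper only introduces later in Theorem~\ref{thm:delayed_condition}.
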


\begin{proof}
	
	First, by the triangle inequality, we have
	\begin{equation*}
	\begin{aligned}
		\|g_k\|
		=
		\|g_{k-1}+(g_k-g_{k-1})\|
		\leq
		\|g_{k-1}\|
		+
		\|g_k-g_{k-1}\|.
	\end{aligned}
    \end{equation*}
	Using the delayed critical condition,	
     we obtain
	\begin{equation*}
	\|g_k\|
	\leq
	(1+A)\|g_{k-1}\|.
    \end{equation*}
	This yields,
	\begin{equation}
		\label{eq:lower-g}
		\|g_{k-1}\|
		\geq
		\frac{1}{1+A}\|g_k\|.
	\end{equation}
 
	On the other hand,
	\begin{equation*}
	\begin{aligned}
		\|g_{k-1}\|
		 =
		\|g_k-(g_k-g_{k-1})\|
        \leq
		\|g_k\|
		+
		\|g_k-g_{k-1}\|.
	\end{aligned}
	\end{equation*}
	Again, using
	\autoref{assumption:critical},
	\begin{equation*}
	\|g_{k-1}\|
	\leq
	\|g_k\|
	+
	A\|g_{k-1}\|.
	\end{equation*}
    This implies that,
	\begin{equation*}
	(1-A)\|g_{k-1}\|
	\leq
	\|g_k\|,
	\end{equation*}
	This yields
	\begin{equation}
		\label{eq:upper-g}
		\|g_{k-1}\|
		\leq
		\frac{1}{1-A}\|g_k\|.
	\end{equation}
	Combining
	\eqref{eq:lower-g}
	and
	\eqref{eq:upper-g},
	we obtain
	\begin{equation} \label{eq:lower-upper}
	\frac{1}{1+A}\|g_k\|
	\leq
	\|g_{k-1}\|
	\leq
	\frac{1}{1-A}\|g_k\|.
	\end{equation}

	Next, according to the definition of the Polyak stepsize,  we have
	\begin{equation*}
		\label{eq:alpha-definition}
		\alpha_{k-1}
		=
		\gamma_{k-1}
		\frac{
			\langle g_{k-1},g_{k-2}\rangle
		}
		{\|g_{k-1}\|^2}.
	\end{equation*}
	By the Cauchy--Schwarz inequality, we obtain
	\begin{equation*}
	\langle g_{k-1},g_{k-2}\rangle
	\leq
	\|g_{k-1}\|
	\|g_{k-2}\|.
	\end{equation*}
	It then follows that,
	\begin{equation*}
	\alpha_{k-1}
	\leq
	\gamma_{k-1}
	\frac{
		\|g_{k-2}\|
	}
	{\|g_{k-1}\|}.
	\end{equation*}
	Applying the upper bound in
	\eqref{eq:lower-upper} with index $k-1$ gives
	\begin{equation*}
	\|g_{k-2}\|
	\leq
	\frac{1}{1-A}
	\|g_{k-1}\|.
	\end{equation*}
	This implies that,
	\begin{equation*}
	\alpha_{k-1}
	\leq
	\gamma_{k-1}
	\frac{
		\frac{1}{1-A}
		\|g_{k-1}\|
	}
	{\|g_{k-1}\|}
	,
	\end{equation*}
	this yields
	\begin{equation*}
		\alpha_{k-1}
		\leq
		\frac{\gamma_{k-1}}{1-A}.
	\end{equation*}
	The proof is complete.
	
\end{proof}

\subsection{Sufficient conditions for the delayed critical condition}
\label{subsec:sufficient-delayed-condition}

We next establish a sufficient condition for the delayed critical
condition when $F$ is Lipschitz continuous. The proof relies on an
induction argument that explicitly accounts for the one-step delayed
operator evaluation in SPolyakEG.


\begin{theorem}[Sufficient condition for the delayed critical condition]
	\label{thm:delayed_condition}
	 Define $g_{-1}=F(x_0)$. Assume that $F$ is $L$-Lipschitz continuous with $L>0$ and let
	$0<A<1$. And the initial
	extrapolation stepsize satisfies $L\gamma_0\leq A$,
	and for $k\geq 1$, the extrapolation stepsizes satisfy
	\begin{equation*}
		\label{eq:gamma_condition}
		L\left(
		\gamma_k+
		\frac{2\gamma_{k-1}}{1-A}
		\right)
		\leq A.
	\end{equation*}
	Then the delayed critical condition
	\begin{equation*}
		\label{eq:delayed_condition}
		\|g_k-g_{k-1}\|
		\leq
		A\|g_{k-1}\|,
		\qquad k\geq0,
	\end{equation*}
	holds for the SPolyakEG iterates.
\end{theorem}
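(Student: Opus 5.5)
We argue by strong induction on $k\geq 0$. The hypothesis at stage $k$ is that the delayed critical condition $\|g_j-g_{j-1}\|\leq A\|g_{j-1}\|$ holds for all $0\leq j\leq k-1$, with the convention $g_{-1}=F(x_0)$. With this convention Step~0 of Algorithm~\ref{alg:SPolyakEG} reads $\hat x_0=x_0-\gamma_0 g_{-1}$ and $\alpha_0=\gamma_0\langle g_0,g_{-1}\rangle/\|g_0\|^2$. So the iteration has the same compact form for every $k\geq 0$, and the proof of Lemma~\ref{lem:operator-bound} applies at every index where the condition is already known.

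\textbf{Base case $k=0$.} By Lipschitz continuity,
\[
\|g_0-g_{-1}\| = \|F(\hat x_0)-F(x_0)\| \leq L\|\hat x_0-x_0\| = L\gamma_0\|F(x_0)\| \leq A\|g_{-1}\|,
\]
using $L\gamma_0\leq A$.

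\textbf{Inductive step.} Fix $k\geq1$ and assume the condition at index $k-1$, i.e. $\|g_{k-1}-g_{k-2}\|\leq A\|g_{k-2}\|$. By Lipschitz continuity, $\|g_k-g_{k-1}\|\leq L\|\hat x_k-\hat x_{k-1}\|$. We expand this difference using the updates $\hat x_k=x_k-\gamma_k g_{k-1}$, $x_k=x_{k-1}-\alpha_{k-1}g_{k-1}$ and $\hat x_{k-1}=x_{k-1}-\gamma_{k-1}g_{k-2}$:
\[
\hat x_k-\hat x_{k-1} = -(\gamma_k+\alpha_{k-1})\,g_{k-1}+\gamma_{k-1}\,g_{k-2}.
\]
The triangle inequality then gives
\[
\|\hat x_k-\hat x_{k-1}\| \leq (\gamma_k+|\alpha_{k-1}|)\|g_{k-1}\|+\gamma_{k-1}\|g_{k-2}\|.
\]

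We now use the hypothesis at index $k-1$, via the argument of Lemma~\ref{lem:operator-bound}, to get two bounds.
\begin{itemize}
\item $\|g_{k-2}\|\leq \frac{1}{1-A}\|g_{k-1}\|$.
\item $|\alpha_{k-1}|\leq \frac{\gamma_{k-1}}{1-A}$.
\end{itemize}
For the second bound we also check that $\alpha_{k-1}$ is nonnegative. Writing
\[
\langle g_{k-1},g_{k-2}\rangle=\|g_{k-2}\|^2+\langle g_{k-1}-g_{k-2},g_{k-2}\rangle \geq (1-A)\|g_{k-2}\|^2>0,
\]
so $\alpha_{k-1}>0$, and Cauchy--Schwarz combined with the first bound gives the upper bound. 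Substituting both bounds,
\[
\|g_k-g_{k-1}\| \leq L\left(\gamma_k+\frac{2\gamma_{k-1}}{1-A}\right)\|g_{k-1}\| \leq A\|g_{k-1}\|,
\]
by the stepsize assumption. This closes the induction.

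The main obstacle is a circularity: Lemma~\ref{lem:operator-bound} is stated under the delayed critical condition, which is exactly what we are proving. The induction must therefore invoke it only at index $k-1$, where the condition is already established. The case $k=1$ needs separate attention because it involves $g_{-1}=F(x_0)$ and $\alpha_0$. There the convention $g_{-1}=F(x_0)$ makes Step~0 consistent with the general recursion, so the same computation applies. If $g_{k-1}=0$ the algorithm has already terminated, so we may assume all $g_j\neq0$ throughout.
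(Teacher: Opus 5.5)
Your proof is correct and follows the paper's argument essentially step for step. You use the same base case, the same induction using only the condition at index $k-1$, the same expansion $\hat x_k-\hat x_{k-1}=-(\gamma_k+\alpha_{k-1})g_{k-1}+\gamma_{k-1}g_{k-2}$, and the same two bounds $\|g_{k-2}\|\le\frac{1}{1-A}\|g_{k-1}\|$ and $\alpha_{k-1}\le\frac{\gamma_{k-1}}{1-A}$. The only difference is cosmetic: you get positivity of $\alpha_{k-1}$ from $\langle g_{k-1},g_{k-2}\rangle\ge(1-A)\|g_{k-2}\|^2$ rather than from the paper's polarization bound $\langle g_{k-1},g_{k-2}\rangle\ge\frac{1}{1+A}\|g_{k-1}\|^2$, and positivity is not actually needed, since Cauchy--Schwarz already bounds $|\alpha_{k-1}|$.
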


\begin{proof}
	We prove the delayed critical condition by induction.
	
	For $k=0$, since
	$
	g_{-1}=F(x_0),
	\hat{x}_0=x_0-\gamma_0 g_{-1},
	$
	the Lipschitz continuity of $F$ gives
	\begin{equation} \label{eq:sdcc1}
	\begin{aligned}
		\|g_0-g_{-1}\|
		 =
		\|F(\hat{x}_0)-F(x_0)\| 
		\leq
		L\|\hat{x}_0-x_0\| 
		=
		L\gamma_0\|g_{-1}\|
		\leq
		A\|g_{-1}\|,
	\end{aligned}
	\end{equation}
	where the last inequality in \eqref{eq:sdcc1} follows from $L\gamma_0\leq A$.
	Hence, the delayed critical condition holds for $k=0$.
	
	Now suppose that, for some $k\geq 1$, the delayed critical condition
	holds at iteration $k-1$, i.e.,
	\begin{equation}
		\label{eq:induction_hypothesis}
		\|g_{k-1}-g_{k-2}\|
		\leq
		A\|g_{k-2}\|.
	\end{equation}
	Then, by the reverse triangle inequality,
	\[
	\|g_{k-1}\|
	\geq
	\|g_{k-2}\|-\|g_{k-1}-g_{k-2}\|
	\geq
	(1-A)\|g_{k-2}\|,
	\]
	and this yields
	\begin{equation}
		\label{eq:g_kminus2_bound}
		\|g_{k-2}\|
		\leq
		\frac{1}{1-A}\|g_{k-1}\|.
	\end{equation}
By the induction hypothesis,
$
\|g_{k-1}-g_{k-2}\|
\leq A\|g_{k-2}\|$,
and hence
	\begin{equation*}
\|g_{k-1}\|
\leq
\|g_{k-2}\|
+
\|g_{k-1}-g_{k-2}\|
\leq
(1+A)\|g_{k-2}\|.
	\end{equation*}
This implies that,
	\begin{equation}  
		\label{eq:g_kminus2_lbound}
\|g_{k-2}\|^2
\geq
\frac{1}{(1+A)^2}\|g_{k-1}\|^2.
	\end{equation}

Moreover, by the definition of the adaptive stepsize,
$
\alpha_{k-1}
=
\gamma_{k-1}
\frac{\langle g_{k-1},g_{k-2}\rangle}
{\|g_{k-1}\|^2}.
$
The induction hypothesis \eqref{eq:induction_hypothesis} implies
	\begin{equation*} 
\begin{aligned}
	\langle g_{k-1},g_{k-2}\rangle
	&=
	\frac{1}{2}
	\left(
	\|g_{k-1}\|^2
	+
	\|g_{k-2}\|^2
	-
	\|g_{k-1}-g_{k-2}\|^2
	\right)\\
	&\geq
	\frac{1}{2}
	\left(
	\|g_{k-1}\|^2
	+
	(1-A^2)\|g_{k-2}\|^2
	\right)\\
	&\geq
	\frac{1}{2}
	\left(
	1+\frac{1-A}{1+A}
	\right)
	\|g_{k-1}\|^2\\
	&=
	\frac{1}{1+A}\|g_{k-1}\|^2.
\end{aligned}
	\end{equation*}
The second inequality follows from \eqref{eq:g_kminus2_lbound}.
	Consequently,
	\begin{equation}
		\label{eq:alpha_positive}
		\alpha_{k-1}
		\geq
		\frac{\gamma_{k-1}}{1+A}
		>0.
	\end{equation}
	Since $\gamma_k>0$, it follows that
	$
	\alpha_{k-1}+\gamma_k>0.
	$
	Thus,
	$
	|\alpha_{k-1}+\gamma_k|
	=
	\alpha_{k-1}+\gamma_k.
	$
	
	On the other hand, by the Cauchy--Schwarz inequality and
	\eqref{eq:g_kminus2_bound}, we obtain
	\begin{equation} \label{eq:alpha_upbound}
	\begin{aligned}
		\alpha_{k-1}
		&=
		\gamma_{k-1}
		\frac{\langle g_{k-1},g_{k-2}\rangle}
		{\|g_{k-1}\|^2}\\
		&\leq
		\gamma_{k-1}
		\frac{\|g_{k-2}\|}{\|g_{k-1}\|}\\
		&\leq
		\frac{\gamma_{k-1}}{1-A}.
	\end{aligned}
	\end{equation}
	Using
	$
	\hat{x}_k
	=
	x_k-\gamma_k g_{k-1},
	x_k
	=
	x_{k-1}-\alpha_{k-1}g_{k-1},
	\hat{x}_{k-1}
	=
	x_{k-1}-\gamma_{k-1}g_{k-2},
	$
	we obtain
	\begin{equation*}
	\hat{x}_k-\hat{x}_{k-1}
	=
	-(\alpha_{k-1}+\gamma_k)g_{k-1}
	+
	\gamma_{k-1}g_{k-2}.
	\end{equation*}
	This in turn gives,
	\begin{equation*}
	\begin{aligned}
		\|\hat{x}_k-\hat{x}_{k-1}\|
		&=
		\left\|
		-(\alpha_{k-1}+\gamma_k)g_{k-1}
		+
		\gamma_{k-1}g_{k-2}
		\right\|\\
		&\leq
		|\alpha_{k-1}+\gamma_k|\|g_{k-1}\|
		+
		\gamma_{k-1}\|g_{k-2}\|\\
		&=
		(\alpha_{k-1}+\gamma_k)\|g_{k-1}\|
		+
		\gamma_{k-1}\|g_{k-2}\|\\
		&\leq
		\left(
		\gamma_k
		+
		\frac{\gamma_{k-1}}{1-A}
		\right)\|g_{k-1}\|
		+
		\frac{\gamma_{k-1}}{1-A}\|g_{k-1}\|\\
		&=
		\left(
		\gamma_k
		+
		\frac{2\gamma_{k-1}}{1-A}
		\right)\|g_{k-1}\|.
	\end{aligned}
	\end{equation*}
 The second inequality follows from \eqref{eq:g_kminus2_bound} and \eqref{eq:alpha_upbound}.
	Finally, since $F$ is $L$-Lipschitz continuous,
	\[
	\begin{aligned}
		\|g_k-g_{k-1}\|
		&=
		\|F(\hat{x}_k)-F(\hat{x}_{k-1})\|\\
		&\leq
		L\|\hat{x}_k-\hat{x}_{k-1}\|\\
		&\leq
		L\left(
		\gamma_k
		+
		\frac{2\gamma_{k-1}}{1-A}
		\right)
		\|g_{k-1}\|\\
		&\leq
		A\|g_{k-1}\|.
	\end{aligned}
	\]
	Therefore, the delayed critical condition holds at iteration $k$.
	By induction, it holds for all $k\geq 0$.
\end{proof}

For the constant stepsize case $\gamma_k=\gamma$, the above
condition reduces to
\begin{equation}
	\label{eq:constant_gamma}
		\gamma
		\leq
		\frac{A(1-A)}
		{L(3-A)}.
\end{equation}

\begin{remark}
	\label{rem:constant_stepsize}
	The stepsize bound in \eqref{eq:constant_gamma} is a sufficient
	condition induced by the above proof. In particular, the bound may be
	conservative due to the use of the triangle inequality and the
	intermediate estimates in the inductive argument. We do not claim
	that it is necessary or tight.
\end{remark}

\subsection*{Comparison with PolyakEG}

For the original PolyakEG method, the critical condition is
\begin{equation*}
	\label{eq:original_condition}
	\|F(\hat{x}_k)-F(x_k)\|
	\leq
	A\|F(x_k)\|.
\end{equation*}
Since
$
\hat{x}_k=x_k-\gamma_kF(x_k),
$
the Lipschitz continuity of $F$ gives
\begin{equation*}
	\|F(\hat{x}_k)-F(x_k)\|
	\leq
	L\gamma_k\|F(x_k)\|.
\end{equation*}
Therefore, the condition
\begin{equation*}
	\label{eq:peg_stepsize}
	\gamma_k
	\leq
	\frac{A}{L} 
\end{equation*}
is sufficient to guarantee the critical condition of PolyakEG.

In contrast, for SPolyakEG, our inductive analysis yields the
sufficient condition
\begin{equation*}
	\label{eq:spolyakeg_general_stepsize}
	L\left(
	\gamma_k+
	\frac{2\gamma_{k-1}}{1-A}
	\right)
	\leq A.
\end{equation*}
In the constant stepsize case $\gamma_k=\gamma$, this reduces to
\begin{equation}
	\label{eq:spolyakeg_stepsize}
	\gamma
	\leq
	\frac{A(1-A)}
	{L(3-A)}.
\end{equation}

Thus, if we compare the resulting sufficient stepsize upper bounds,
\[
\gamma_{\rm PEG}^{\max}
=
\frac{A}{L},
\qquad
\gamma_{\rm SP}^{\max}
=
\frac{A(1-A)}{L(3-A)},
\]
their ratio is
\begin{equation}
	\label{eq:stepsize_ratio}
	\frac{\gamma_{\rm SP}^{\max}}
	{\gamma_{\rm PEG}^{\max}}
	=
	\frac{1-A}{3-A}
	<1,
	\qquad A\in(0,1).
\end{equation}

Hence, under the sufficient conditions established by our analysis,
the stepsize bound for SPolyakEG is more restrictive than the
corresponding bound obtained for PolyakEG. This difference reflects
the additional technical restriction introduced by the delayed
operator evaluation in our inductive argument. Since the bound in
\eqref{eq:spolyakeg_stepsize} is only a sufficient condition and may
be conservative, the ratio in \eqref{eq:stepsize_ratio} should not
be interpreted as the ratio of the maximal stable stepsizes of the
two methods.

\subsection{Fej\'er convergence and sublinear rate}
\label{subsec:fejer-convergence}

Under the delayed critical condition, the Polyak stepsize is positive
and admits the bounds established above. We now use these properties
to derive a Fej\'er-type descent inequality for the iterates. This
inequality yields a sublinear convergence rate for the squared
operator residual evaluated at the extrapolation points.

We first establish a lower bound for the Polyak stepsize, which will be used in deriving the subsequent descent inequality.

\begin{lemma}\label{lemma:inner}
	If the delayed critical condition holds,  then
	\begin{equation*}
		\langle g_k,g_{k-1}\rangle
		\geq
		\frac{1}{1+A}\|g_k\|^2 .
	\end{equation*}
\end{lemma}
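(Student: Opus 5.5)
The plan is to mirror the polarization argument already used inside the proof of \autoref{thm:delayed_condition}, but with the indices shifted so that the lower bound is expressed in terms of $\|g_k\|$ rather than $\|g_{k-1}\|$. First, I will write the inner product via the polarization identity,
\begin{equation*}
\langle g_k,g_{k-1}\rangle
=
\frac{1}{2}\left(\|g_k\|^2+\|g_{k-1}\|^2-\|g_k-g_{k-1}\|^2\right).
\end{equation*}
Then I will apply \autoref{assumption:critical} directly to the subtracted term, $\|g_k-g_{k-1}\|^2\leq A^2\|g_{k-1}\|^2$. This gives
\begin{equation*}
\langle g_k,g_{k-1}\rangle\geq \frac{1}{2}\left(\|g_k\|^2+(1-A^2)\|g_{k-1}\|^2\right).
\end{equation*}

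The remaining task is to eliminate $\|g_{k-1}\|$ in favour of $\|g_k\|$. Since $0<A<1$, the coefficient $1-A^2$ is positive, so a \emph{lower} bound on $\|g_{k-1}\|$ is what is needed. This is exactly the left inequality of \autoref{lem:operator-bound}, namely $\|g_{k-1}\|\geq \frac{1}{1+A}\|g_k\|$. It is obtained from the triangle inequality $\|g_k\|\leq\|g_{k-1}\|+\|g_k-g_{k-1}\|\leq(1+A)\|g_{k-1}\|$. Squaring gives $(1-A^2)\|g_{k-1}\|^2\geq \frac{1-A^2}{(1+A)^2}\|g_k\|^2=\frac{1-A}{1+A}\|g_k\|^2$.

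Finally, I will add the two contributions:
\begin{equation*}
\frac{1}{2}\left(1+\frac{1-A}{1+A}\right)\|g_k\|^2=\frac{1}{1+A}\|g_k\|^2,
\end{equation*}
which is the claim.

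The only delicate point is choosing the correct side of \autoref{lem:operator-bound}: the $(1+A)$-bound is the right one, and it is legitimate to use only because $1-A^2\geq0$. Everything else is routine algebra.

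As a by-product, I would record that the result yields $\alpha_k=\gamma_k\langle g_k,g_{k-1}\rangle/\|g_k\|^2\geq \gamma_k/(1+A)>0$. This justifies dropping the positive-part projection in the stepsize, and it supplies the lower bound on the stepsize needed for the Fej\'er inequality.
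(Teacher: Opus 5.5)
Your proposal is correct and follows essentially the same route as the paper's proof. Both use the polarization identity, bound $\|g_k-g_{k-1}\|^2$ by $A^2\|g_{k-1}\|^2$, apply the triangle-inequality bound $\|g_{k-1}\|\geq\frac{1}{1+A}\|g_k\|$ (valid since $1-A^2\geq 0$), and combine the terms to get $\frac{1}{1+A}\|g_k\|^2$.
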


\begin{proof}
	Using the identity
	$	2\langle a,b\rangle
		=
		\|a\|^2+\|b\|^2-\|a-b\|^2 $,
	with $a=g_k$ and $b=g_{k-1}$, we have
	\begin{equation*}
		\begin{aligned}
			2\langle g_k,g_{k-1}\rangle =
			\|g_k\|^2+\|g_{k-1}\|^2-\|g_k-g_{k-1}\|^2 .
		\end{aligned}
	\end{equation*}
	By Assumption \ref{assumption:critical}, we have
	\begin{equation*}
		\|g_k-g_{k-1}\|^2
		\leq
		A^2\|g_{k-1}\|^2 .
	\end{equation*}
     It then follows that,
	\begin{equation} \label{eq:gkgk-1}
		2\langle g_k,g_{k-1}\rangle
		\geq
		\|g_k\|^2
		+
		(1-A^2)\|g_{k-1}\|^2 .
	\end{equation}

   In addition,
	\begin{equation*}
		\|g_k\|
		\leq
		\|g_k-g_{k-1}\|
		+
		\|g_{k-1}\|
		\leq
		(1+A)\|g_{k-1}\|,
	\end{equation*}
	this implies that,
	\begin{equation} \label{eq:alphak_lowerb}
		\|g_{k-1}\|
		\geq
		\frac1{1+A}\|g_k\|.
	\end{equation}
	Substituting \eqref{eq:alphak_lowerb} into the previous inequality \eqref{eq:gkgk-1} gives
	\begin{equation*}
		\begin{aligned}
			2\langle g_k,g_{k-1}\rangle
			&\geq
			\|g_k\|^2
			+
			(1-A^2)
			\frac1{(1+A)^2}
			\|g_k\|^2
			\\
			&=
			\frac{2}{1+A}
			\|g_k\|^2 .
		\end{aligned}
	\end{equation*}
    It then follows that
	\begin{equation*}
		\langle g_k,g_{k-1}\rangle
		\geq
		\frac1{1+A}\|g_k\|^2 .
	\end{equation*}
	
\end{proof}

\begin{lemma} [Lower bound of Polyak stepsize] 
	\label{lemma:alpha-lower-bound}	
	IF the delayed critical condition holds, then the Polyak stepsize generated by SPolyakEG satisfies
	\begin{equation*}
		\label{eq:alpha}
		\alpha_k
		\geq
		\frac{\gamma_k}{1+A} >0.
	\end{equation*}	
\end{lemma}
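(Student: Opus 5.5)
The plan is to combine the closed form of the Polyak stepsize with Lemma~\ref{lemma:inner}. For $k\geq 1$, the extrapolation step of SPolyakEG gives $x_k-\hat x_k=\gamma_k g_{k-1}$. Substituting this into the definition of $\alpha_k$ yields
\begin{equation*}
\alpha_k=\gamma_k\frac{\langle g_k,g_{k-1}\rangle}{\|g_k\|^2},
\end{equation*}
as derived in Section~\ref{subsec:polyak-stepsize}. Throughout, we work in the nontrivial case $g_k\neq 0$, in which the algorithm has not terminated.

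Next, we invoke Lemma~\ref{lemma:inner}, which gives $\langle g_k,g_{k-1}\rangle\geq \frac{1}{1+A}\|g_k\|^2$ under the delayed critical condition. Since $\gamma_k>0$ and $\|g_k\|^2>0$, multiplying and dividing preserves the inequality, so $\alpha_k\geq \gamma_k/(1+A)$. Strict positivity then follows because $\gamma_k>0$ and $1+A>0$.

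The initial index $k=0$ needs separate treatment, since there the extrapolation uses $F(x_0)$ rather than a stored $g_{-1}$. Adopting the convention $g_{-1}=F(x_0)$ of Theorem~\ref{thm:delayed_condition}, we have $x_0-\hat x_0=\gamma_0 g_{-1}$, so the same formula for $\alpha_0$ holds. Provided the delayed critical condition also holds at $k=0$ (as Theorem~\ref{thm:delayed_condition} guarantees under its stepsize assumptions), the argument of Lemma~\ref{lemma:inner} applies verbatim.

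The main point requiring care is therefore not the algebra but the index range. We must ensure that the condition invoked in Lemma~\ref{lemma:inner} is available at the index in question, and that $g_k\neq0$ so the division is legitimate. Otherwise the proof is a one-line consequence of Lemma~\ref{lemma:inner}.
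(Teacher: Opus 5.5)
Your proposal is correct and follows the paper's proof essentially verbatim: substitute $x_k-\hat x_k=\gamma_k g_{k-1}$ into the definition of $\alpha_k$, apply Lemma~\ref{lemma:inner}, and use $\gamma_k>0$ for strict positivity. Your extra remarks on the $k=0$ case (via $g_{-1}=F(x_0)$) and on requiring $g_k\neq 0$ are sound clarifications that the paper leaves implicit.
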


\begin{proof}
	Since $ x_k-\hat x_k=\gamma_k g_{k-1}$,	form the definition of $\alpha_k$,
	we have
	\begin{equation*}
		\alpha_k
		=
		\gamma_k
		\frac{
			\langle g_k,g_{k-1}\rangle
		}
		{\|g_k\|^2}.
	\end{equation*}	 
	From Lemma~\ref{lemma:inner}, we arrive at
	\begin{equation*}
		\alpha_k
		\geq
		\frac{\gamma_k}{1+A}.
	\end{equation*}	 
    Since $\gamma_k>0$, we conclude that $
	\alpha_k>0$.
\end{proof}

Using the lower bound of the Polyak stepsize, we next establish the
following descent inequality.

\begin{theorem}[Fej\'er-type descent]
	\label{theorem:fejer}
	
	Suppose that $F$ satisfies the monotonicity condition 
	and that $x_\star$ satisfies $		F(x_\star)=0$.
Then the iterates generated by SPolyakEG satisfy
	\begin{equation*}
		\label{eq:descent}
		\begin{aligned}
			\|x_{k+1}-x_\star\|^2
			\leq 
			\|x_k-x_\star\|^2
			-\alpha_k\gamma_k
			\langle g_k,g_{k-1}\rangle .
		\end{aligned}
	\end{equation*}
	
\end{theorem}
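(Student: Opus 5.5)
Start from the exact expansion already written out in Section~\ref{subsec:polyak-stepsize}. For $k\ge1$ the update $x_{k+1}=x_k-\alpha_k g_k$ gives
\begin{equation*}
\|x_{k+1}-x_\star\|^2=\|x_k-x_\star\|^2-2\alpha_k\langle g_k,x_k-\hat x_k\rangle-2\alpha_k\langle g_k,\hat x_k-x_\star\rangle+\alpha_k^2\|g_k\|^2 .
\end{equation*}
The third term is handled by monotonicity. Since $F(x_\star)=0$, we have $\langle g_k,\hat x_k-x_\star\rangle=\langle F(\hat x_k)-F(x_\star),\hat x_k-x_\star\rangle\ge0$. The sign of $\alpha_k$ matters here, and Lemma~\ref{lemma:alpha-lower-bound} gives $\alpha_k\ge \gamma_k/(1+A)>0$ under the delayed critical condition. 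So the third term can be dropped, which yields \eqref{eq:single-call-upper-bound}.

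Next, use the definition of the Polyak stepsize to combine the remaining terms. Because $\alpha_k\|g_k\|^2=\langle g_k,x_k-\hat x_k\rangle$, we get $\alpha_k^2\|g_k\|^2=\alpha_k\langle g_k,x_k-\hat x_k\rangle$. Hence
\begin{equation*}
-2\alpha_k\langle g_k,x_k-\hat x_k\rangle+\alpha_k^2\|g_k\|^2=-\alpha_k\langle g_k,x_k-\hat x_k\rangle .
\end{equation*}
This is the value of the quadratic in $\alpha$ at its minimizer. Substituting $x_k-\hat x_k=\gamma_k g_{k-1}$ turns the right-hand side into $-\alpha_k\gamma_k\langle g_k,g_{k-1}\rangle$, which is the claimed inequality.

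The only real care point is the implicit use of the delayed critical condition (Assumption~\ref{assumption:critical}), which is needed so that $\alpha_k>0$ and the monotonicity term can be dropped. The theorem statement does not list this condition, so I would state explicitly that it is in force, as throughout Section~\ref{sec:convergence}. Given Theorem~\ref{thm:delayed_condition}, it holds automatically under the stepsize rule there.

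For $k=0$ the same computation applies with $g_{-1}=F(x_0)$ and $x_0-\hat x_0=\gamma_0 g_{-1}$. Positivity of $\alpha_0$ follows from the $k=0$ case of the delayed condition, via the argument of Lemma~\ref{lemma:inner}.

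Finally, I would note that Lemma~\ref{lemma:inner} gives $\langle g_k,g_{k-1}\rangle\ge\|g_k\|^2/(1+A)$, so the subtracted term is nonnegative and the inequality is a genuine descent. This is not needed for the statement itself.
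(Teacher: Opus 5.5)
Your proof is correct and follows essentially the same route as the paper. You expand $\|x_{k+1}-x_\star\|^2$, drop $-2\alpha_k\langle g_k,\hat x_k-x_\star\rangle$ by monotonicity, and collapse the remaining terms using $\alpha_k\|g_k\|^2=\gamma_k\langle g_k,g_{k-1}\rangle$. Your explicit appeal to $\alpha_k>0$, via the delayed critical condition (including its $k=0$ instance for $\alpha_0$), also makes visible a hypothesis that the paper's proof needs but uses only silently when it substitutes its lower bound on $\langle g_k,x_k-x_\star\rangle$ into the recursion.
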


\begin{proof}
	
	By the update rule $
		x_{k+1}=x_k-\alpha_k g_k $, 
	we obtain
	\begin{equation} \label{eq:fej1}
		\begin{aligned}
			\|x_{k+1}-x_\star\|^2
			=
			\|x_k-x_\star\|^2
			-2\alpha_k
			\langle g_k,x_k-x_\star\rangle
			+\alpha_k^2\|g_k\|^2 .
		\end{aligned}
	\end{equation}
	Since  $
		x_k-x_\star
		=
		(x_k-\hat x_k)
		+
		(\hat x_k-x_\star)$, 
	we have 
	\begin{equation*}
		\begin{aligned}
			\langle g_k,x_k-x_\star\rangle
			=
			\langle g_k,x_k-\hat x_k\rangle
			+
			\langle g_k,\hat x_k-x_\star\rangle .
		\end{aligned}
	\end{equation*}
	Because 
		$\hat x_k=x_k-\gamma_k g_{k-1}$, 
	it follows that
	$	x_k-\hat x_k
		=
		\gamma_k g_{k-1}$.
	It then follows that,
	\begin{equation} \label{eq:fej2}
		\begin{aligned}
			\langle g_k,x_k-x_\star\rangle
			=
			\gamma_k
			\langle g_k,g_{k-1}\rangle
			+
			\langle g_k,\hat x_k-x_\star\rangle .
		\end{aligned}
	\end{equation}
	
Now we use the root-finding property.
	Since $
		g_k=F(\hat x_k)$  
	and
	$F(x_\star)=0$, 
	we have
	\begin{equation*} 
		\begin{aligned}
			\langle g_k,\hat x_k-x_\star\rangle
			=&
			\langle
			F(\hat x_k)-F(x_\star),
			\hat x_k-x_\star
			\rangle .
		\end{aligned}
	\end{equation*}
	By the monotonicity of $F$, we obtain
	\begin{equation*} 
		\langle g_k,\hat x_k-x_\star\rangle
		\geq0 .
	\end{equation*}
	From the above relation and \eqref{eq:fej2}, we arrive at 
	\begin{equation} \label{eq:fej3}
		\langle g_k,x_k-x_\star\rangle
		\geq
		\gamma_k
		\langle g_k,g_{k-1}\rangle .
	\end{equation}
	Substituting \eqref{eq:fej3} into the distance recursion \eqref{eq:fej1}, we obtain
	\begin{equation}
		\begin{aligned} \label{eq:fej4}
			\|x_{k+1}-x_\star\|^2
			\leq
			\|x_k-x_\star\|^2
			-2\alpha_k\gamma_k
			\langle g_k,g_{k-1}\rangle
			+\alpha_k^2\|g_k\|^2 .
		\end{aligned}
	\end{equation}

	By the definition of $\alpha_k$, we have
	\begin{equation*}
		\alpha_k\|g_k\|^2
		=
		\gamma_k
		\langle g_k,g_{k-1}\rangle ,
	\end{equation*}
    this yields,
	\begin{equation*}
		\alpha_k^2\|g_k\|^2
		=
		\alpha_k\gamma_k
		\langle g_k,g_{k-1}\rangle .
	\end{equation*}
    From the above relation and \eqref{eq:fej4}, we consequently obtain
	\begin{equation*}
			\|x_{k+1}-x_\star\|^2
			\leq
			\|x_k-x_\star\|^2
			-
			\alpha_k\gamma_k
			\langle g_k,g_{k-1}\rangle .
	\end{equation*}
	
\end{proof}

%
%
%
%
%
%
%
%
%
%
%
%
%
%
%

\begin{lemma}[Lower bound of the Polyak descent term]
	\label{lem:polyak-descent-term}
	
	 If the adaptive stepsize of SPolyakEG method is defined as
	\begin{equation*}
		\label{eq:alpha-definition}
		\alpha_k
		=
		\gamma_k
		\frac{
			\langle g_k,g_{k-1}\rangle
		}
		{\|g_k\|^2}.
	\end{equation*}
	Then under the delayed critical condition,
	we have
	\begin{equation*}
		\label{eq:polyak-term-bound}
		\alpha_k\gamma_k
		\langle g_k,g_{k-1}\rangle
		\geq
		\frac{\gamma_k^2}{(1+A)^2}
		\|g_k\|^2 .
	\end{equation*}
	
\end{lemma}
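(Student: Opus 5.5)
The plan is to combine the definition of $\alpha_k$ with Lemma~\ref{lemma:inner} twice, once for each factor. Write
\[
\alpha_k\gamma_k\langle g_k,g_{k-1}\rangle
=\gamma_k^2\frac{\langle g_k,g_{k-1}\rangle^2}{\|g_k\|^2}.
\]
This identity follows directly from substituting the stated formula for $\alpha_k$. It reduces the claim to a lower bound on the squared inner product.

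Under the delayed critical condition, Lemma~\ref{lemma:inner} gives
\[
\langle g_k,g_{k-1}\rangle\geq \frac{1}{1+A}\|g_k\|^2 .
\]
This quantity is nonnegative, and positive in the nontrivial case $g_k\neq 0$, so we may square both sides to obtain
\[
\langle g_k,g_{k-1}\rangle^2\geq \frac{1}{(1+A)^2}\|g_k\|^4 .
\]
Dividing by $\|g_k\|^2>0$ and multiplying by $\gamma_k^2$ yields
\[
\alpha_k\gamma_k\langle g_k,g_{k-1}\rangle\geq \frac{\gamma_k^2}{(1+A)^2}\|g_k\|^2 .
\]

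An alternative route gives the same bound. Lemma~\ref{lemma:alpha-lower-bound} gives $\alpha_k\geq \gamma_k/(1+A)$. Multiplying this by $\gamma_k\langle g_k,g_{k-1}\rangle\geq \gamma_k\|g_k\|^2/(1+A)$ works because both factors are nonnegative.

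The only point needing care is nonnegativity, which justifies squaring or multiplying the inequalities. It is supplied by Lemma~\ref{lemma:inner} together with $\gamma_k>0$. The case $g_k=0$ is excluded by the paper's standing assumption that the algorithm terminates there.
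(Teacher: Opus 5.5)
Your proof is correct and is essentially the paper's argument: substitute the formula for $\alpha_k$ to get $\gamma_k^2\langle g_k,g_{k-1}\rangle^2/\|g_k\|^2$, then square the bound $\langle g_k,g_{k-1}\rangle\geq \|g_k\|^2/(1+A)$ and divide by $\|g_k\|^2$. You cite Lemma~\ref{lemma:inner} for that bound, whereas the paper's proof cites Lemma~\ref{lem:operator-bound}, which does not contain it; together with your explicit note that nonnegativity justifies the squaring, this makes your version slightly more careful than the original.
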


\begin{proof}
	
	Substituting the definition of $\alpha_k$,
	we obtain
	\begin{equation} \label{eq:lbp1}
	\begin{aligned}
		\alpha_k\gamma_k
		\langle g_k,g_{k-1}\rangle
		&=
		\gamma_k
		\frac{
			\langle g_k,g_{k-1}\rangle
		}
		{\|g_k\|^2}
		\gamma_k
		\langle g_k,g_{k-1}\rangle
		\\
		&=
		\gamma_k^2
		\frac{
			\langle g_k,g_{k-1}\rangle^2
		}
		{\|g_k\|^2}.
	\end{aligned}
	\end{equation}
	By Lemma~\ref{lem:operator-bound}, it follows that
	\begin{equation*}
	\langle g_k,g_{k-1}\rangle
	\geq
	\frac1{1+A}\|g_k\|^2 .
	\end{equation*}
	This implies that,
	\begin{equation*} 
	\langle g_k,g_{k-1}\rangle^2
	\geq
	\frac1{(1+A)^2}
	\|g_k\|^4 .
	\end{equation*}
    Combining the above inequality with \eqref{eq:lbp1}, we arrive at,
	\begin{equation*}
	\begin{aligned}
		\alpha_k\gamma_k
		\langle g_k,g_{k-1}\rangle
		&=
		\gamma_k^2
		\frac{
			\langle g_k,g_{k-1}\rangle^2
		}
		{\|g_k\|^2}
		\\
		&\geq
		\gamma_k^2
		\frac{
			\frac1{(1+A)^2}
			\|g_k\|^4
		}
		{\|g_k\|^2}
		\\
		&=
		\frac{\gamma_k^2}{(1+A)^2}
		\|g_k\|^2 .
	\end{aligned}
	\end{equation*}
	This proves the desired inequality.
	
\end{proof}

By summing the descent inequality over the iterations, we then
establish the following $O(1/k)$ convergence rate for the squared
operator residual.

\begin{theorem}
	\label{thm:SPolyakEG_rate}
	
	Suppose that $F$ is monotone and
	Assumption~\ref{assumption:critical} holds.
	Let $\gamma_k\geq\underline{\gamma}>0$.
	Then the iterates generated by SPolyakEG satisfy
	
	\begin{equation*}
		\label{eq:SP_rate}
		\min_{0\leq k\leq K}
		\|F(\hat{x}_k)\|^2
		\leq
		\frac{
			(1+A)^2
			\|x_0-x_\star\|^2
		}
		{
			\underline{\gamma}^{\,2}(K+1)
		}.
	\end{equation*}

\end{theorem}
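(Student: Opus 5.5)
The plan is to telescope the Fej\'er-type descent inequality of Theorem~\ref{theorem:fejer}, after lower-bounding the descent term with Lemma~\ref{lem:polyak-descent-term}. For every $k\geq 1$, combining the two results gives
\begin{equation*}
\|x_{k+1}-x_\star\|^2 \leq \|x_k-x_\star\|^2 - \frac{\gamma_k^2}{(1+A)^2}\|g_k\|^2 \leq \|x_k-x_\star\|^2 - \frac{\underline{\gamma}^{\,2}}{(1+A)^2}\|g_k\|^2,
\end{equation*}
where the second step uses $\gamma_k\geq\underline{\gamma}>0$.

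The index $k=0$ needs separate treatment, because Algorithm~\ref{alg:SPolyakEG} builds $\hat x_0$ from $F(x_0)$ rather than from a stored value. With the convention $g_{-1}=F(x_0)$ from Theorem~\ref{thm:delayed_condition}, we have $x_0-\hat x_0=\gamma_0 g_{-1}$ and $\alpha_0=\gamma_0\langle g_0,g_{-1}\rangle/\|g_0\|^2$. So the $k=0$ step has exactly the same algebraic form as the later ones. I will therefore interpret Assumption~\ref{assumption:critical} as also holding at $k=0$ in the form $\|g_0-g_{-1}\|\leq A\|g_{-1}\|$, which is exactly what Theorem~\ref{thm:delayed_condition} delivers. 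The proofs of Theorem~\ref{theorem:fejer}, Lemma~\ref{lemma:inner} and Lemma~\ref{lem:polyak-descent-term} use only the relation $x_k-\hat x_k=\gamma_k g_{k-1}$, monotonicity, and the critical inequality at index $k$. Hence they apply verbatim at $k=0$, and the displayed inequality holds for all $0\leq k\leq K$.

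Summing the displayed inequality from $k=0$ to $K$ telescopes the distance terms:
\begin{equation*}
\frac{\underline{\gamma}^{\,2}}{(1+A)^2}\sum_{k=0}^{K}\|g_k\|^2 \leq \|x_0-x_\star\|^2-\|x_{K+1}-x_\star\|^2\leq \|x_0-x_\star\|^2 .
\end{equation*}
Bounding the minimum by the average,
\begin{equation*}
\min_{0\leq k\leq K}\|g_k\|^2\leq \frac{1}{K+1}\sum_{k=0}^K\|g_k\|^2,
\end{equation*}
and recalling $g_k=F(\hat x_k)$ then gives the stated bound. If some $g_k=0$, the algorithm has terminated at a solution and the minimum is zero, so the bound holds trivially. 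This is also why we may assume $g_k\neq 0$ when dividing by $\|g_k\|^2$.

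The only real obstacle is justifying the $k=0$ term, that is, making sure the starting step fits the same Polyak/critical framework. The convention $g_{-1}=F(x_0)$ handles it. If one insisted that Assumption~\ref{assumption:critical} holds only for $k\geq 1$, the fallback is to sum from $k=1$ to $K$ and use the plain Fej\'er monotonicity $\|x_1-x_\star\|\leq\|x_0-x_\star\|$. That inequality follows because $\alpha_0$ is the minimizer of the upper bound \eqref{eq:single-call-upper-bound}, so the bound is nonincreasing at $\alpha_0$. This fallback costs an index shift, which is absorbed by including $k=0$ via the convention above.
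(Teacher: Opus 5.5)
Your main argument is correct and is the paper's proof: combine Theorem~\ref{theorem:fejer} with Lemma~\ref{lem:polyak-descent-term}, replace $\gamma_k$ by $\underline{\gamma}$, telescope from $k=0$ to $K$, and bound the minimum by the average. The paper also sums from $k=0$ without comment. It therefore silently relies on the convention $g_{-1}=F(x_0)$ and on the inequality $\|g_0-g_{-1}\|\le A\|g_{-1}\|$, which you make explicit. Making that explicit is an improvement.

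Your fallback paragraph, however, is wrong and should be deleted. Without the critical inequality at $k=0$, $\alpha_0$ can be negative. The bound \eqref{eq:single-call-upper-bound} was derived only for $\alpha_k\ge 0$, because the term $-2\alpha_k\langle g_k,\hat x_k-x_\star\rangle$ can be dropped only then. So the fact that $\alpha_0$ minimizes that quadratic proves nothing about $\|x_1-x_\star\|$.

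Concretely, take $F(x)=x$, $x_\star=0$, $x_0\neq 0$, $\gamma_0=100$. Then $g_0=-99x_0$, $\alpha_0=-100/99$ and $x_1=-99x_0$, so $\|x_1-x_\star\|>\|x_0-x_\star\|$. Now take $\gamma_k=\tfrac12$ for $k\ge 1$. Then $g_k=2^{-k}g_0$, so Assumption~\ref{assumption:critical} holds for every $k\ge1$ with $A=\tfrac12$, and $\underline{\gamma}=\tfrac12$. Yet at $K=0$ the claimed bound reads $9801\|x_0\|^2\le 9\|x_0\|^2$.

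So the statement is false if the assumption is imposed only for $k\ge 1$. The $k=0$ condition, or some restriction on $\gamma_0$, is a genuine hypothesis that no index shift can route around. Even if plain Fej\'er monotonicity held at $k=0$, summing from $k=1$ would give only $\min_{1\le k\le K}$ with denominator $K$, and no control of $\|g_0\|$. Your remark that this shift is ``absorbed by the convention'' is circular, since the fallback was meant for the case where the convention's hypothesis fails.
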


\begin{proof}
	
	From Fej\'er-type descent theorem \ref{theorem:fejer},  we arrive at
	\begin{equation} \label{eq:src1}  
			\|x_{k+1}-x_\star\|^2 \leq \|x_k-x_\star\|^2-
	\alpha_k\gamma_k
	\langle g_k,g_{k-1}\rangle .
	\end{equation}
	By Lemma~\ref{lemma:inner}, we have
	\[
	\langle g_k,g_{k-1}\rangle
	\geq
	\frac{1}{1+A}\|g_k\|^2 .
	\]
	It is worth noting that,
	\[
	\alpha_k
	=
	\gamma_k
	\frac{
		\langle g_k,g_{k-1}\rangle
	}
	{\|g_k\|^2}.
	\]
	From Lemma \ref{lem:polyak-descent-term}, we obtain
	\[
	\alpha_k\gamma_k
	\langle g_k,g_{k-1}\rangle
	\geq
	\frac{\gamma_k^2}{(1+A)^2}
	\|g_k\|^2 .
	\]
    Combining the above with \eqref{eq:src1}, we arrive at
	\[
	\frac{\gamma_k^2}{(1+A)^2}
	\|g_k\|^2
	\leq
	\|x_k-x_\star\|^2
	-
	\|x_{k+1}-x_\star\|^2 .
	\]
	Summing from $0$ to $K$ yields
	\[
	\sum_{k=0}^{K}
	\frac{\gamma_k^2}{(1+A)^2}
	\|g_k\|^2
	\leq
	\|x_0-x_\star\|^2 .
	\]
	Since $\gamma_k\geq\underline{\gamma}$, it can be seen that
	\[
	\frac{(K+1)\underline{\gamma}^2}
	{(1+A)^2}
	\min_{0\leq k\leq K}
	\|g_k\|^2
	\leq
	\|x_0-x_\star\|^2 .
	\]
	The result follows from $g_k=F(\hat{x}_k)$.
\end{proof}

\subsection{Linear convergence under $\mu$-strong monotonicity}
\label{subsec:linear-convergence} 

We finally consider the case in which $F$ is $\mu$-strongly monotone.
In this setting, the monotonicity term in the distance recursion
provides an additional contraction mechanism. By combining strong
monotonicity with the delayed critical condition and the stepsize
bounds established above, we obtain a linear convergence result for
the iterates.

\begin{theorem}[Linear convergence of SPolyakEG]
	\label{thm:SPolyakEG-linear}
	IF delayed critical Ccondition and $\mu$-Strong monotonicity 
	hold.
	And the extrapolation parameters satisfy
	\begin{equation*}
		\label{eq:gamma-condition-linear}
		\gamma_k\mu
		\leq
		\frac{A(1-A)}{2},
	\end{equation*}
	then the iterates generated by SPolyakEG satisfy
	\begin{equation*}
		\label{eq:linear-variable}
		\|x_{k+1}-x^\star\|^2
		\leq
		\left(
		1-
		\frac{2\gamma_k\mu}
		{(1+A)^2}
		\right)
		\|x_k-x^\star\|^2 .
	\end{equation*}
\end{theorem}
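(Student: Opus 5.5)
The plan is to redo the distance recursion from the proof of Theorem~\ref{theorem:fejer}, but to keep the strong-monotonicity term instead of discarding it. Starting from \eqref{eq:fej1} and the splitting \eqref{eq:fej2}, I use $\mu$-strong monotonicity with $F(x^\star)=0$ to get $\langle g_k,\hat x_k-x^\star\rangle\ge \mu\|\hat x_k-x^\star\|^2$. Combining this with $\alpha_k^2\|g_k\|^2=\alpha_k\gamma_k\langle g_k,g_{k-1}\rangle$, exactly as before, yields
\begin{equation*}
\|x_{k+1}-x^\star\|^2\le \|x_k-x^\star\|^2-\alpha_k\gamma_k\langle g_k,g_{k-1}\rangle-2\alpha_k\mu\|\hat x_k-x^\star\|^2 .
\end{equation*}
Everything then reduces to converting $\|\hat x_k-x^\star\|^2$ into $\|x_k-x^\star\|^2$. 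The error this produces should be paid for by the Polyak descent term $\alpha_k\gamma_k\langle g_k,g_{k-1}\rangle$.

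For the conversion I write $\hat x_k-x^\star=(x_k-x^\star)-\gamma_k g_{k-1}$ and apply Young's inequality with a parameter $\varepsilon\in(0,1)$:
\begin{equation*}
\|\hat x_k-x^\star\|^2\ge (1-\varepsilon)\|x_k-x^\star\|^2-\Big(\tfrac1\varepsilon-1\Big)\gamma_k^2\|g_{k-1}\|^2 .
\end{equation*}
I then use $\alpha_k\ge \gamma_k/(1+A)$ from Lemma~\ref{lemma:alpha-lower-bound}. The contraction coefficient becomes $2\alpha_k\mu(1-\varepsilon)\ge 2\gamma_k\mu(1-\varepsilon)/(1+A)$. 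Choosing $\varepsilon=A/(1+A)$ makes this exactly $2\gamma_k\mu/(1+A)^2$, which is the claimed rate. With this choice $\tfrac1\varepsilon-1=1/A$, so the leftover error term is $2\alpha_k\mu\gamma_k^2\|g_{k-1}\|^2/A$.

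The main obstacle is absorbing this error into $\alpha_k\gamma_k\langle g_k,g_{k-1}\rangle$. Since both terms carry the common factor $\alpha_k\gamma_k>0$, it suffices to show $\tfrac{2\mu\gamma_k}{A}\|g_{k-1}\|^2\le\langle g_k,g_{k-1}\rangle$. The bound of Lemma~\ref{lemma:inner} is phrased in terms of $\|g_k\|$, and the cruder polarization bound $(1-A^2)\|g_{k-1}\|^2/2$ is too weak for the stated stepsize condition. So I will instead write $\langle g_k,g_{k-1}\rangle=\|g_{k-1}\|^2+\langle g_k-g_{k-1},g_{k-1}\rangle$. Cauchy--Schwarz and Assumption~\ref{assumption:critical} then give $\langle g_k,g_{k-1}\rangle\ge(1-A)\|g_{k-1}\|^2$. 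The required inequality becomes $2\mu\gamma_k/A\le 1-A$, which is precisely the hypothesis $\gamma_k\mu\le A(1-A)/2$. Dropping the resulting nonpositive remainder gives the claimed contraction.

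Finally, I would check that the factor $1-2\gamma_k\mu/(1+A)^2$ lies in $(0,1)$ under the hypothesis, which is immediate since $A(1-A)<1$.
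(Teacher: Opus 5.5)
Your proposal is correct and follows the paper's proof essentially step for step: the same expansion keeping the strong-monotonicity term, the same Young's inequality (your $\varepsilon=A/(1+A)$ is the paper's $\omega=A$ reparameterized), and the same bound $\langle g_k,g_{k-1}\rangle\ge(1-A)\|g_{k-1}\|^2$ to absorb the error under $\gamma_k\mu\le A(1-A)/2$. The finish is also the same, using $\alpha_k\ge\gamma_k/(1+A)$ from Lemma~\ref{lemma:alpha-lower-bound}.
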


\begin{proof}
	Let	 $
	g_k=F(\hat{x}_k)$,
	from the update rule $
	x_{k+1}=x_k-\alpha_k g_k $, 
	we have
	\begin{equation}	\label{eq:basic-expansion}
		\begin{aligned}
			\|x_{k+1}-x^\star\|^2
			=
			\|x_k-x^\star\|^2
			-2\alpha_k
			\langle g_k,x_k-x^\star\rangle
			+
			\alpha_k^2\|g_k\|^2 .
		\end{aligned}
	\end{equation}
	Since $
	x_k-\hat{x}_k
	=
	\gamma_k g_{k-1}$,  it then follows that
	\begin{equation*}
	x_k-x^\star
	=
	\gamma_k g_{k-1}
	+
	(\hat{x}_k-x^\star).
    \end{equation*}
	This implies that
	\begin{equation}  \label{eq:inner-decomposition}
		\begin{aligned}
			\langle g_k,x_k-x^\star\rangle
			= 
			\gamma_k
			\langle g_k,g_{k-1}\rangle
			+
			\langle g_k,\hat{x}_k-x^\star\rangle .
		\end{aligned}
	\end{equation}
	Substituting \eqref{eq:inner-decomposition}
	into \eqref{eq:basic-expansion}, we obtain
	\begin{equation} \label{eq:lcr1}
		\begin{aligned}
			\|x_{k+1}-x^\star\|^2
			\leq
			\|x_k-x^\star\|^2
			-2\alpha_k\gamma_k
			\langle g_k,g_{k-1}\rangle
			-2\alpha_k
			\langle g_k,\hat{x}_k-x^\star\rangle
			+
			\alpha_k^2\|g_k\|^2 .
		\end{aligned}
	\end{equation}
	
	According to the definition of $\alpha_k$,
	\begin{equation} \label{eq:lcr2}
		\alpha_k^2\|g_k\|^2
		=
		\alpha_k\gamma_k
		\langle g_k,g_{k-1}\rangle .
	\end{equation}
	Upon substituting \eqref{eq:lcr2} into \eqref{eq:lcr1}, we arrive at 
	\begin{equation*}
		\begin{aligned}
			\|x_{k+1}-x^\star\|^2
			\leq 
			\|x_k-x^\star\|^2
			-\alpha_k\gamma_k
			\langle g_k,g_{k-1}\rangle
			-2\alpha_k
			\langle g_k,\hat{x}_k-x^\star\rangle .
		\end{aligned}
		\label{eq:descent2}
	\end{equation*}
	By $\mu$-strong monotonicity of $F$,  we have
	\[
	\langle g_k,\hat{x}_k-x^\star\rangle
	\geq
	\mu
	\|\hat{x}_k-x^\star\|^2,
	\]
	this implies that
	\begin{equation}
		\begin{aligned}
			\|x_{k+1}-x^\star\|^2
			\leq
			\|x_k-x^\star\|^2
			-\alpha_k\gamma_k
			\langle g_k,g_{k-1}\rangle
			-2\alpha_k\mu
			\|\hat{x}_k-x^\star\|^2 .
		\end{aligned}
		\label{eq:descent3}
	\end{equation}

	It is also worth noting that
	\[
	x_k-x^\star
	=
	(\hat{x}_k-x^\star)
	+\gamma_k g_{k-1},
	\]
	from Young's inequality,  we conclude that for any $\omega>0$,
	\begin{equation*}
		\|x_k-x^\star\|^2
		\leq
		(1+\omega)
		\|\hat{x}_k-x^\star\|^2
		+
		(1+\frac1\omega)
		\gamma_k^2
		\|g_{k-1}\|^2 .
	\end{equation*}
	We thus get
	\begin{equation} \label{eq:lcr3}
		\|\hat{x}_k-x^\star\|^2
		\geq
		\frac1{1+\omega}
		\|x_k-x^\star\|^2
		-
		\frac1\omega
		\gamma_k^2\|g_{k-1}\|^2 .
	\end{equation}
	Substituting \eqref{eq:lcr3} into
	\eqref{eq:descent3}  yields
	\begin{equation*} 
		\begin{aligned}
			\|x_{k+1}-x^\star\|^2
			\leq
			\left(
			1-\frac{2\alpha_k\mu}{1+\omega}
			\right)
			\|x_k-x^\star\|^2
			+
			\frac{2\alpha_k\mu}{\omega}
			\gamma_k^2
			\|g_{k-1}\|^2
			-
			\alpha_k\gamma_k
			\langle g_k,g_{k-1}\rangle .
		\end{aligned}
		\label{eq:descent4}
	\end{equation*}
   By the Cauchy--Schwarz inequality and the delayed critical condition,
we have
\begin{align*}
	\langle g_k,g_{k-1}\rangle
	&=
	\|g_{k-1}\|^2
	+
	\langle g_k-g_{k-1},g_{k-1}\rangle
	\nonumber\\
	&\geq
	\|g_{k-1}\|^2
	-
	\|g_k-g_{k-1}\|\|g_{k-1}\|
	\nonumber\\
	&\geq
	(1-A)\|g_{k-1}\|^2.
	\label{eq:inner_product_lower_bound_previous}
\end{align*}
%
%
%
   This implies that 
	\begin{equation} \label{eq:descent5}
	~~~~~~	\begin{aligned}
			\|x_{k+1}-x^\star\|^2
			\leq
			\left(
			1-\frac{2\alpha_k\mu}{1+\omega}
			\right)
			\|x_k-x^\star\|^2
			+
			\alpha_k\gamma_k
			\left(
			\frac{2\mu\gamma_k}{\omega}
			-(1-A)
			\right)
			\|g_{k-1}\|^2 .
		\end{aligned}		
	\end{equation}

	Choose $
	\omega=A $. 
	Under condition $
	\gamma_k\mu
	\leq
	\frac{A(1-A)}2 $, 
	the second term in \eqref{eq:descent5} is non-positive.
	This in turn gives
	\begin{equation} \label{eq:lcr4}
	\|x_{k+1}-x^\star\|^2
	\leq
	\left(
	1-\frac{2\alpha_k\mu}{1+A}
	\right)
	\|x_k-x^\star\|^2 .
	\end{equation}
	
	Invoking Lemma~\ref{lemma:alpha-lower-bound}, we obtain
	\begin{equation} \label{eq:lcr5}
	\alpha_k
	\geq
	\frac{\gamma_k}{1+A},
	\end{equation}
	plugging \eqref{eq:lcr5} into \eqref{eq:lcr4}, we get
	\begin{equation*}
	\|x_{k+1}-x^\star\|^2
	\leq
	\left(
	1-
	\frac{2\gamma_k\mu}{(1+A)^2}
	\right)
	\|x_k-x^\star\|^2 .
	\end{equation*}
Hence the sequence converges linearly.	 
%
%
%
%
%
\end{proof}

\begin{corollary}
	\label{thm:linear_rate}
	IF delayed critical Ccondition and $\mu$-Strong monotonicity 
	hold. 
	And the extrapolation parameters  $\gamma_k\equiv\gamma$  satisfy	
	\begin{equation*}
		0 < \gamma \leq \frac{A(1-A)}{2\mu},
	\end{equation*}
	then the iterates generated by SPolyakEG satisfy
	\begin{equation*}
		\label{eq:linear-rate}
		\|x_k-x^\star\|^2
		\leq
		\rho^k
		\|x_0-x^\star\|^2 ,
	\end{equation*}	
	where
$		\label{eq:rho}
		\rho
		=
		1-
		\frac{2\gamma\mu}{(1+A)^2}
		<1 $.
%
	
\end{corollary}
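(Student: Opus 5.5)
The corollary follows by specializing Theorem~\ref{thm:SPolyakEG-linear} to constant extrapolation stepsizes and then unrolling the one-step contraction. With $\gamma_k\equiv\gamma$, the hypothesis $0<\gamma\le \frac{A(1-A)}{2\mu}$ is exactly $\gamma_k\mu\le \frac{A(1-A)}{2}$ for every $k$. Theorem~\ref{thm:SPolyakEG-linear} therefore applies at each iteration and gives
\[
\|x_{j+1}-x^\star\|^2\le \rho\,\|x_j-x^\star\|^2 ,\qquad \rho=1-\frac{2\gamma\mu}{(1+A)^2}.
\]

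Next I will check that $0\le \rho<1$, which is needed so that multiplying the inequalities preserves their direction. The bound $\rho<1$ is immediate from $\gamma,\mu>0$. For nonnegativity, note that
\[
\frac{2\gamma\mu}{(1+A)^2}\le \frac{A(1-A)}{(1+A)^2}<1,
\]
because $A(1-A)\le \tfrac14<1\le (1+A)^2$. Hence $\rho\in(0,1)$.

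Finally, I will argue by induction on $k$. The case $k=0$ is trivial. If $\|x_k-x^\star\|^2\le\rho^k\|x_0-x^\star\|^2$, then the one-step bound with $j=k$ gives
\[
\|x_{k+1}-x^\star\|^2\le \rho\,\|x_k-x^\star\|^2\le \rho^{k+1}\|x_0-x^\star\|^2 .
\]

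There is one subtlety. Theorem~\ref{thm:SPolyakEG-linear} is stated for the iterations $k\ge1$ produced by the loop, while step $0$ uses $F(x_0)$ in place of a previous $g_{k-1}$. I will adopt the convention $g_{-1}=F(x_0)$ from Theorem~\ref{thm:delayed_condition}, so that $\hat x_0=x_0-\gamma_0 g_{-1}$ fits the same compact form. The delayed critical condition, which the corollary assumes for all relevant indices including $k=0$, then lets the same proof cover $j=0$. This indexing issue is the only point needing care; the rest is routine.
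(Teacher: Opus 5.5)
Your proposal is correct and matches the paper's proof, which simply specializes Theorem~\ref{thm:SPolyakEG-linear} to $\gamma_k\equiv\gamma$ and unrolls the one-step contraction. You are more careful than the paper on two points. The paper does not check $\rho\ge 0$, which is needed when multiplying the induction hypothesis by $\rho$. It also does not note that the step $k=0$ requires the delayed critical condition with $g_{-1}=F(x_0)$, whereas Assumption~\ref{assumption:critical} is stated only for $k\ge 1$; your convention makes explicit the hypothesis the paper uses silently.
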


\begin{proof}
	For constant stepsize $\gamma_k=\gamma$, from Theorem \ref{thm:SPolyakEG-linear}, we have
	\begin{equation*}
    \|x_k-x^\star\|^2
    \leq
    \rho^k
    \|x_0-x^\star\|^2 ,
	\end{equation*}
where
   $ \rho=
    1-\frac{2\gamma\mu}{(1+A)^2}$.
\end{proof}


To highlight the similarities and differences between PolyakEG and
the proposed SPolyakEG method, we compare the two methods from several
perspectives, including the extrapolation direction, critical
condition, sufficient stepsize condition, convergence rate, and
oracle complexity. The main difference lies in the operator evaluation
used to construct the extrapolation point. Specifically, PolyakEG
requires a fresh evaluation of $F(x_k)$ at each iteration, whereas
SPolyakEG reuses the previously computed value $F(\hat{x}_{k-1})$.
This reuse reduces the number of new oracle evaluations per iteration
from two to one after initialization.

The convergence results for both methods are established under their
respective critical conditions. In particular, the stepsize bounds
reported in Table~\ref{tab:comparison} are sufficient conditions
obtained from the corresponding analyses and should not be interpreted
as necessary or tight stability bounds. For SPolyakEG, the more
restrictive stepsize condition results from the delayed operator
evaluation and the inductive argument used to establish the delayed
critical condition. The resulting comparison is summarized in
Table~\ref{tab:comparison}.

\begin{table}[H]
	\centering
	\small
	\setlength{\tabcolsep}{3pt} 
	\renewcommand{\arraystretch}{1.6} 
	\caption{Comparison between PolyakEG and SPolyakEG.}
	\label{tab:comparison}
	\begin{tabular}{ccc}
		\toprule
		& PolyakEG & SPolyakEG \\
		\midrule
		Extrapolation direction
		&
		$F(x_k)$
		&
		$F(\hat{x}_{k-1})$
		\\
		Critical condition
		&
		$\|F(\hat{x}_k)-F(x_k)\|
		\leq A\|F(x_k)\|$
		&
		$\|F(\hat{x}_k)-F(\hat{x}_{k-1})\|
		\leq A\|F(\hat{x}_{k-1})\|$
		\\
		Sufficient stepsize condition
		&
		$\displaystyle
		\gamma_k\leq\frac{A}{L}$
		&
		$\displaystyle
		L\left(
		\gamma_k+
		\frac{2\gamma_{k-1}}{1-A}
		\right)
		\leq A$
		\\
		Constant stepsize bound
		&
		$\displaystyle
		\gamma\leq\frac{A}{L}$
		&
		$\displaystyle
		\gamma\leq
		\frac{A(1-A)}{L(3-A)}$
		\\
		Residual rate
		&
		$\displaystyle
		\min_{0\leq k\leq K}
		\|F(\hat{x}_k)\|^2=O(1/K)$
		&
		$\displaystyle
		\min_{0\leq k\leq K}
		\|F(\hat{x}_k)\|^2=O(1/K)$
		\\
     	Strong monotonicity
		&
		Linear convergence
		&
		Linear convergence
		\\ 
		New oracle evaluations/iteration
		&
		2
		&
		1
		\\ 
		Oracle complexity
		&
		$O(2/\epsilon)$
		&
		$O(1/\epsilon)$
		\\
		\bottomrule
	\end{tabular}
\end{table}

\section{Numerical Experiments}
We now present numerical experiments to evaluate the performance of the proposed
\algname{SPolyakEG} algorithm.
We consider three standard minimax problems: a bilinear game, a convex-concave
minimax problem, and a Dirac-GAN problem.
For each problem, we compare \algname{SPolyakEG} with
\algname{EG}, \algname{SEG}, and \algname{PolyakEG} under the same
extrapolation step-size rule $\gamma_k<1/L$ and identical initialization.
Performance is evaluated in terms of the relative distance to the saddle point
and the average runtime.
The results are presented for each problem separately. 
 
\subsection{Bilinear game}

\begin{minipage}[t]{.50\textwidth}
	\vspace{0pt}
	We consider the 
	bilinear game \cite{keke2024training}
	\begin{align}
		\label{eqn:bilinear-plot}
		\underset{\theta \in \reals}{\min} \,\, \underset{\phi \in \reals}{\max} \,\, f(\theta ,\phi) =\theta \phi,
	\end{align} which is equivalent to \eqref{eq:root} with operator
$F(\theta,\phi)=(\phi,-\theta)$ and admits the unique saddle point
$(0,0)$.
\Cref{fig:bilinear3} depicts the trajectories over the first 50 iterations
produced by \algname{EG}, \algname{SEG}, \algname{PolyakEG}, and
\algname{SPolyakEG}, all initialized at
$(\theta_0,\phi_0)=(1,1)$.
All methods use the extrapolation step size
$\gamma_k=0.5<1/L$, where $L$ denotes the Lipschitz constant of $F$.
The trajectories show that the Polyak-type methods exhibit faster
per-iteration progress than their fixed-step counterparts.
In particular, \algname{SPolyakEG} converges faster than \algname{SEG}
in terms of iteration count, illustrating the role of the adaptive
Polyak correction in the proposed method.
\end{minipage}
\hfill
\begin{minipage}[t]{.48\textwidth}
	\vspace{0pt}
	\centering
	\includegraphics[width=\linewidth]{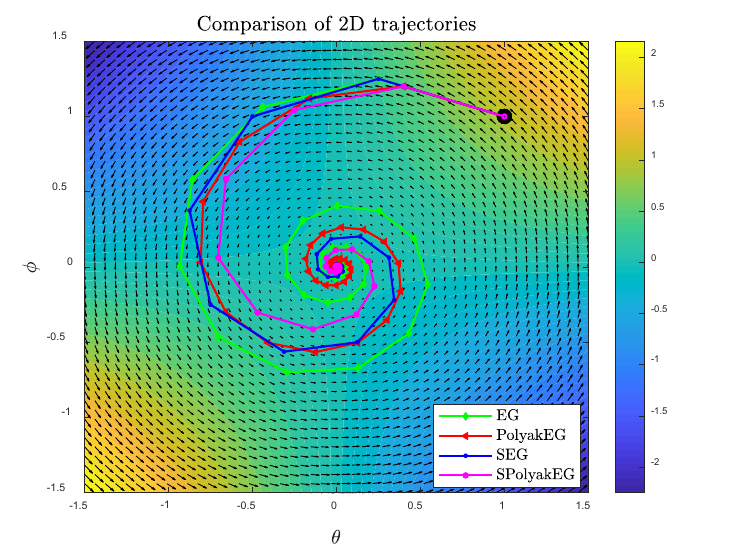}
	\captionsetup{font=small}
	\captionof{figure}{Trajectories of \algname{EG}, \algname{SEG},
		\algname{PolyakEG}, and \algname{SPolyakEG} for
		\eqref{eqn:bilinear-plot} with $\gamma_k=0.5<1/L$.
		The unique solution $(0,0)$ is marked in red. }
	\label{fig:bilinear3}
\end{minipage}

\begin{figure}[ht]
	\centering
	\begin{subfigure}[t]{.49\textwidth}
		\centering
		\includegraphics[width=\linewidth]{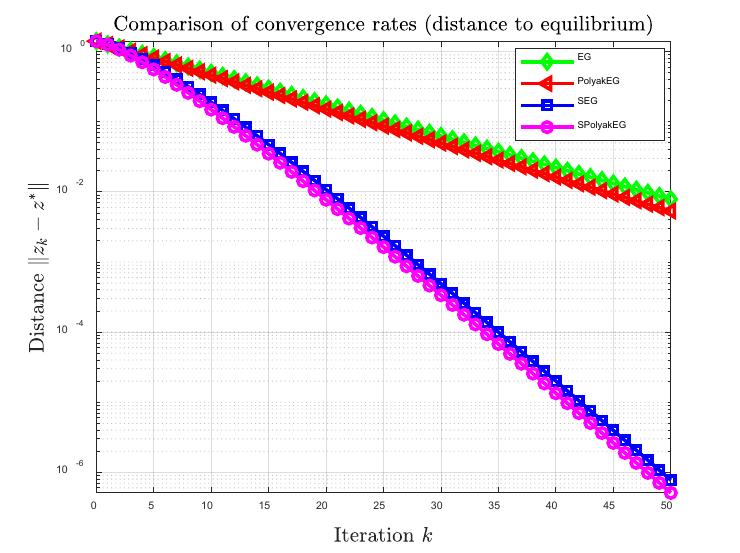}
		\caption{}
		\label{fig:bilinear2}
	\end{subfigure}
	\hfill
	\begin{subfigure}[t]{.49\textwidth}
		\centering
		\includegraphics[width=\linewidth]{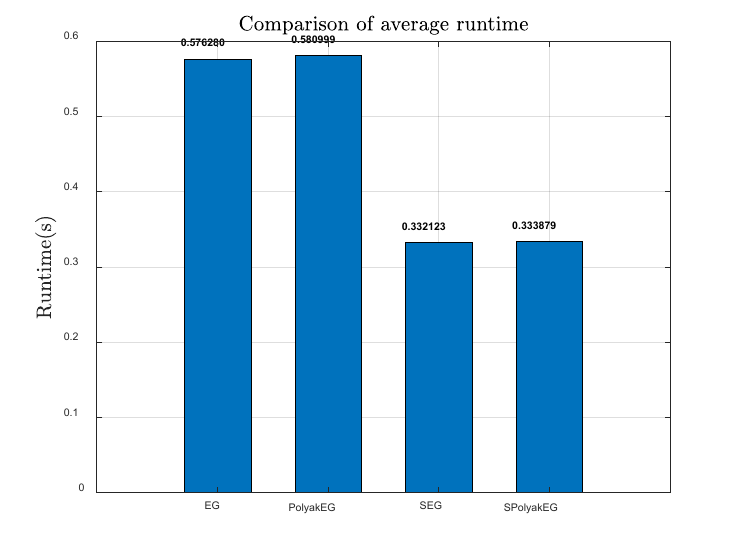}
		\caption{}
		\label{fig:bilinear1}
	\end{subfigure}
	\caption{Numerical results on the bilinear minimax game.
		Panel (a) compares the convergence behavior of the four methods
		in terms of the relative distance to the saddle point.
		Panel (b) compares their corresponding runtime.
	}
	\label{fig:bilinear1-plot}
\end{figure}

\Cref{fig:bilinear1-plot} provides a quantitative comparison of the four
algorithms on the bilinear game.
As shown in \Cref{fig:bilinear2}, \algname{SPolyakEG} exhibits the fastest
decay of the distance to the saddle point among the four tested methods,
followed by \algname{SEG}, \algname{PolyakEG}, and \algname{EG}.
The results also show that the Polyak-type methods exhibit faster
iteration-wise convergence than their fixed-step counterparts.
In particular, \algname{SPolyakEG} converges faster than \algname{SEG},
although both methods require only one new operator evaluation per iteration.
This comparison isolates the effect of the adaptive Polyak correction within
the single-call framework.

In terms of computational efficiency, the runtime results in
\Cref{fig:bilinear1} show that the single-call methods,
\algname{SEG} and \algname{SPolyakEG}, require less computational time
than the two-call methods, \algname{EG} and \algname{PolyakEG}.
In this experiment, the reduction in runtime is approximately $40\%$.
These results illustrate the computational benefit of reusing the
previously computed operator evaluation while retaining the adaptive
correction step.

\subsection{convex-concave min-max problem}

\begin{minipage}[t]{.50\textwidth}
	\vspace{0pt}
	We consider the 
	convex-concave min-max problem
	\begin{align}
		\label{eqn:cc-plot}
		\underset{\theta \in \reals}{\min} \,\, \underset{\phi \in \reals}{\max} \,\, f(\theta,\phi) = \frac{1}{2} \theta^2 + \theta \phi - \frac{1}{2} \phi^2, 
	\end{align}  which is equivalent to \eqref{eq:root} with $F(\theta,\phi) = (\theta+\phi, \phi-\theta)$ and admits the unique saddle point $(0,0)$.
    We plot in \Cref{fig:cc2-3} the first 20 iterations of \algname{EG}, \algname{SEG}, \algname{PolyakEG} and \algname{SPolyakEG} starting from $(\theta_0, \phi_0) = (1, 1)$.
   All methods use the extrapolation step-size $\gamma_k = 0.3 < 1/L$, where $L$ is the tight Lipschitz constant of $F$.
   We observe that \algname{SPolyakEG} achieves faster per-iteration convergence than \algname{SEG} with fixed $\alpha_k = \gamma_k$. This improvement highlights the role of the adaptive Polyak correction in the proposed single-call scheme.
\end{minipage}
\hfill
\begin{minipage}[t]{.48\textwidth}
	\vspace{0pt}
	\centering
	\includegraphics[width=\linewidth]{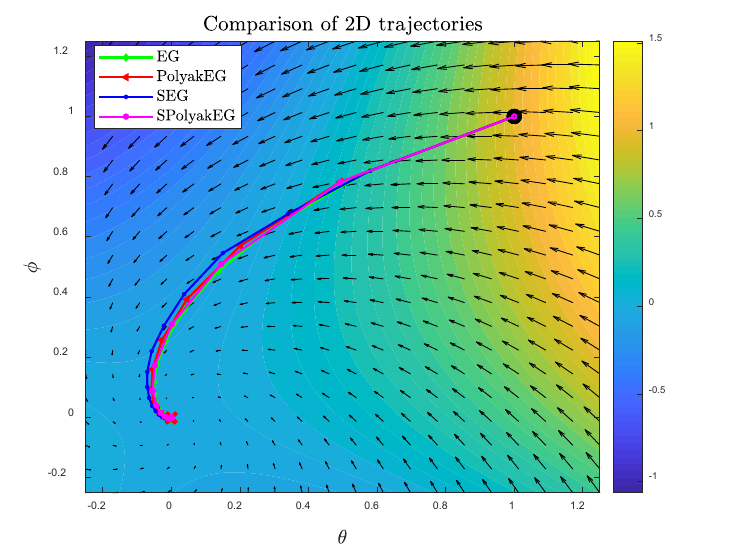}
	\captionsetup{font=small}
	\captionof{figure}{Trajectories of \algname{EG}, \algname{SEG}, \algname{PolyakEG} and \algname{SPolyakEG} for \eqref{eqn:cc-plot} using $\gamma_k=1/L$. The unique solution $(0,0)$ is marked in red.}
	\label{fig:cc2-3}
\end{minipage}

\begin{figure}[ht]
	\centering
	\begin{subfigure}[t]{.5\textwidth}
		\centering
		\includegraphics[width=\linewidth]{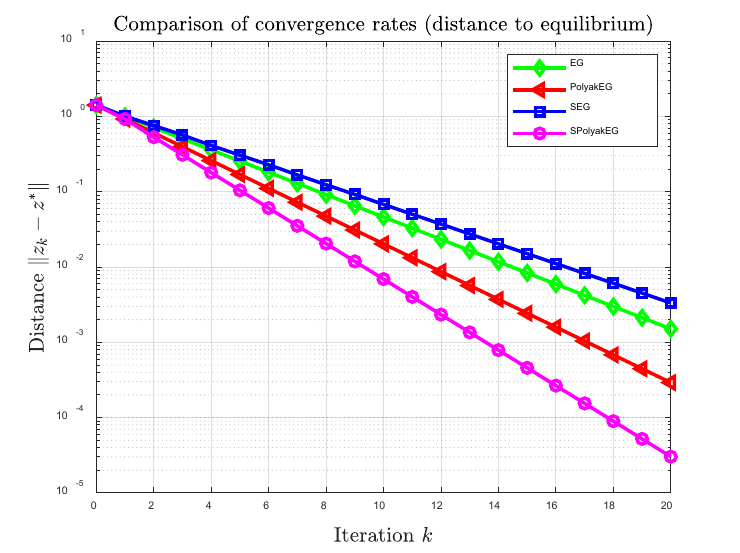}
		\caption{}
		\label{fig:cc2-1}
	\end{subfigure}
	\hfill
	\begin{subfigure}[t]{.5\textwidth}
		\centering
		\includegraphics[width=\linewidth]{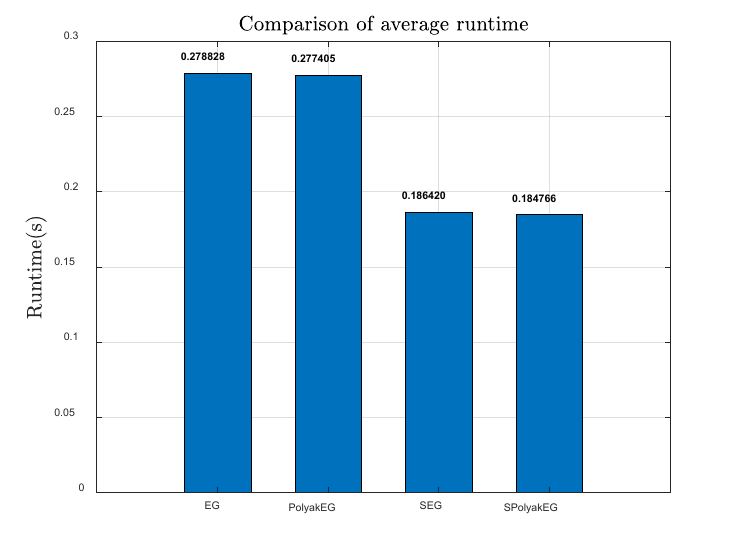}
		\caption{ }
		\label{fig:cc2-2}
	\end{subfigure}
	\caption{Panel (a) compares the convergence behavior of the four methods in terms of the relative distance to the saddle point.
		Panel (b) compares the runtime of the four methods. The single-call methods, \algname{SEG} and \algname{SPolyakEG}, require only one new operator evaluation per iteration, whereas \algname{EG} and \algname{PolyakEG} require two. 
	}
	\label{fig:cc}
\end{figure} 

\Cref{fig:cc} reports the numerical behavior of all four algorithms on the convex-concave minimax problem.
As can be seen from the relative distance curves in \Cref{fig:cc2-1}, \algname{SPolyakEG} exhibits the fastest convergence toward the unique saddle point, followed by \algname{SEG}, \algname{PolyakEG}, and \algname{EG}.
The Polyak-type methods exhibit faster iteration-wise convergence than their fixed-step counterparts, illustrating the benefit of the adaptive correction step size $\alpha_k$. 
In particular, despite relying on the recycled operator information from the preceding iteration, \algname{SPolyakEG} achieves faster per-iteration convergence than \algname{SEG}. This observation suggests that the adaptive Polyak correction can mitigate the effect of the delayed operator information.

In terms of computational efficiency, the runtime comparison in \Cref{fig:cc2-2} shows that the single-call methods, \algname{SEG} and \algname{SPolyakEG}, require less computational time than \algname{EG} and \algname{PolyakEG}. This reduction is consistent with their use of only one new operator evaluation per iteration.
Overall, these results demonstrate that SPolyakEG combines the single-call computational structure of \algname{SEG} with an adaptive Polyak correction, improving iteration-wise convergence while retaining the computational advantage of using only one new operator evaluation per iteration.

\subsection{The Dirac-GAN problem}

\begin{minipage}[t]{.50\textwidth}
	\vspace{0pt}
	We consider the Dirac-GAN problem \cite{fiez2021minimax}
	\begin{equation*}
		\underset{\theta \in \reals}{\min}\,
		\underset{\phi \in \reals}{\max}\,
		f(\theta,\phi)
		=
		-\log_2\left(1+\exp(-\theta\phi)\right)-1,
	\end{equation*}
	which is equivalent to \eqref{eq:root} with operator $
	F(\theta,\phi)
	=
	\frac{1}{\ln 2}
	\left(
	\phi\big(1-\sigma(\theta\phi)\big),
	-\theta\big(1-\sigma(\theta\phi)\big)
	\right)$,
	where $\sigma(t)=1/(1+e^{-t})$ denotes the sigmoid function.
	This problem admits the unique saddle point $(0,0)$.
	\Cref{fig:DiracGAN2} illustrates the trajectories of the first 50
	iterations of \algname{EG}, \algname{SEG}, \algname{PolyakEG}, and
	\algname{SPolyakEG}, initialized at
	$(\theta_0,\phi_0)=(1,1)$.
	All methods use the fixed extrapolation step size $\gamma_k=0.5$.
	We observe that the Polyak-type methods exhibit faster per-iteration
	convergence than their fixed-step counterparts.
	In particular, \algname{SPolyakEG} converges faster than
	\algname{SEG}, despite using only one new operator evaluation per
	iteration.
	This observation illustrates the benefit of the adaptive Polyak
	correction within the single-call framework.
\end{minipage}
\hfill
\begin{minipage}[t]{.48\textwidth}
	\vspace{0pt}
	\centering
	\includegraphics[width=\linewidth]{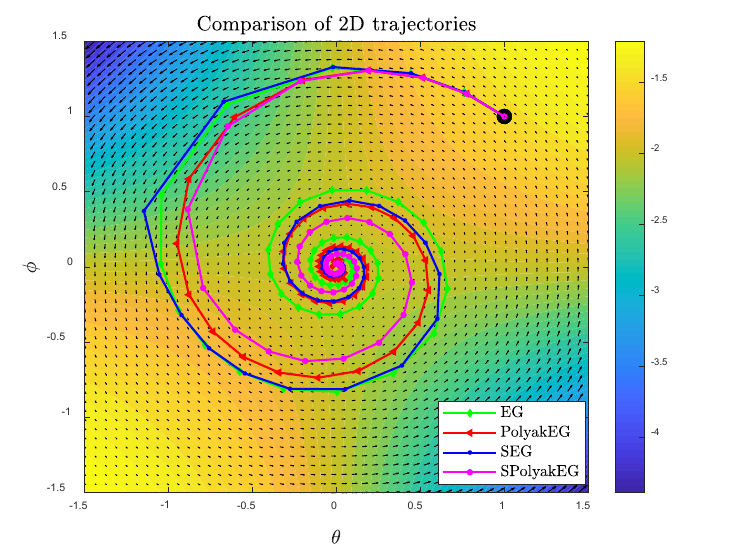}
	\captionsetup{font=small}
	\captionof{figure}{Trajectories of \algname{EG}, \algname{SEG},
		\algname{PolyakEG}, and \algname{SPolyakEG} for the Dirac-GAN problem
		using $\gamma_k=0.5$.
		The unique solution $(0,0)$ is marked in red.}
	\label{fig:DiracGAN2}
\end{minipage}

\begin{figure}[ht]
	\centering
	\begin{subfigure}[t]{.49\textwidth}
		\centering
		\includegraphics[width=\linewidth]{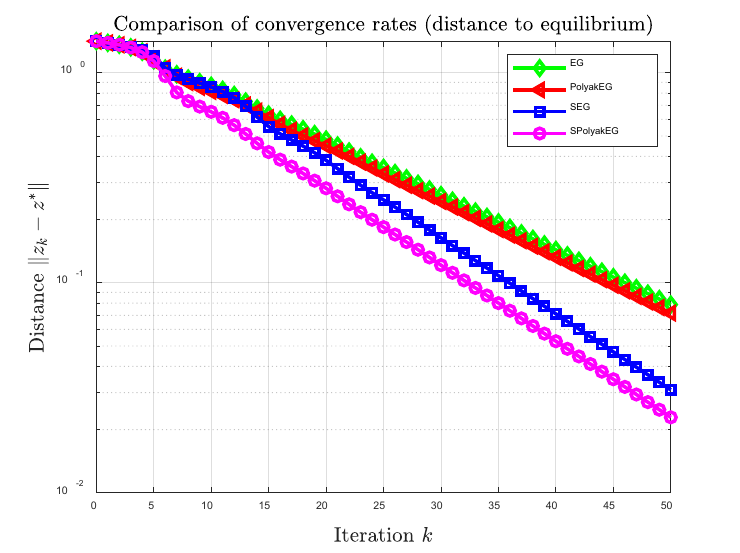}
		\caption{Relative distance to the saddle point versus iteration count.}
		\label{fig:DiracGAN2-1}
	\end{subfigure}
	\hfill
	\begin{subfigure}[t]{.49\textwidth}
		\centering
		\includegraphics[width=\linewidth]{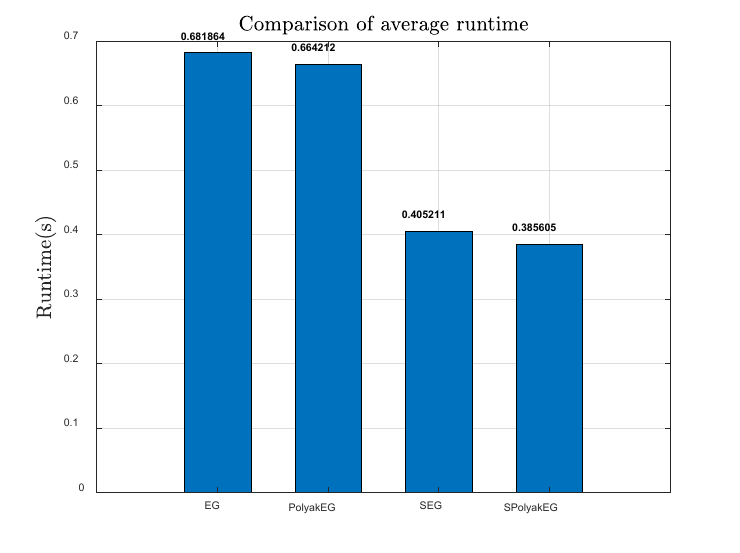}
		\caption{Average runtime over repeated trials.}
		\label{fig:DiracGAN2-2}
	\end{subfigure}
	\caption{Numerical results on the Dirac-GAN problem.
		Panel (a) compares the convergence behavior of the four methods
		in terms of the relative distance to the saddle point.
		Panel (b) compares their average computational runtime.
		The single-call methods require only one new operator evaluation
		per iteration.
	}
	\label{fig:DiracGAN2-full}
\end{figure}

\Cref{fig:DiracGAN2-full} presents the quantitative performance comparison
on the Dirac-GAN problem.
As shown in \Cref{fig:DiracGAN2-1}, \algname{SPolyakEG} exhibits the
fastest decay of the relative distance to the saddle point among the
four tested methods, followed by \algname{SEG}, \algname{PolyakEG}, and
\algname{EG}.
The Polyak-type methods exhibit faster iteration-wise convergence than
their fixed-step counterparts, illustrating the benefit of the adaptive
correction step size.

In particular, despite using the recycled operator information from the
preceding iteration, \algname{SPolyakEG} achieves faster iteration-wise
convergence than \algname{SEG}.
This observation suggests that the adaptive Polyak correction can mitigate
the effect of the delayed operator information while preserving the
single-call structure.

In terms of computational efficiency, \Cref{fig:DiracGAN2-2} shows that
the single-call methods, \algname{SEG} and \algname{SPolyakEG}, require
approximately $42\%$ less runtime than the two-evaluation methods,
\algname{EG} and \algname{PolyakEG}.
These results show that \algname{SPolyakEG} combines the single-call
computational structure with an adaptive Polyak correction, leading to
improved iteration-wise convergence while retaining the computational
advantage of one new operator evaluation per iteration.

\section{Conclusion}
\label{sec:conclusion}

We proposed SPolyakEG, a single-call variant of the Polyak
extragradient method for solving the root-finding problem $
F(x^\star)=0$.
The method recycles the previously computed extrapolation operator
evaluation, reducing the number of new operator evaluations from two
to one per iteration after initialization.

To analyze the effect of the delayed operator information, we
introduced a delayed critical condition and established the resulting
bounds on consecutive operator values and the adaptive Polyak
stepsize. These properties ensure the positivity of the adaptive
stepsize and lead to a Fej\'er-type descent inequality. Under
monotonicity, this yields an $O(1/K)$ convergence rate for the
squared residual evaluated at the extrapolation points. Under strong
monotonicity and a suitable stepsize condition, we further establish
linear convergence of the iterates. For Lipschitz continuous
operators, we also provide sufficient conditions on the
extrapolation stepsize under which the delayed critical condition is
satisfied.

From the computational perspective, the main advantage of SPolyakEG
is its reduced oracle complexity. PolyakEG requires two fresh
evaluations of the operator per iteration, whereas SPolyakEG requires
only one after the initialization step. Hence, for problems where
operator evaluations dominate the computational cost, the proposed
scheme provides a single-call alternative while maintaining the same
order of residual convergence and linear convergence under strong
monotonicity within the established theoretical conditions.

\section*{Acknowledgments}
This work was supported by the General Program of Chongqing Natural Science Foundation (Grant No. CSTB2025NSCQ-GPX0795) and the Scientific and Technological Research Program of Chongqing Municipal Education Commission (Grant No. KJQN202603101).

\bibliography{bibfile}

\end{document}